\title{Tropical optimization problems in\\ time-constrained project scheduling\thanks{This work was supported in part by the Russian Foundation for Humanities (grant No. 16-02-00059).}}
\author{N. Krivulin\thanks{Faculty of Mathematics and Mechanics, St.~Petersburg State University, 28 Universitetsky Ave., St.~Petersburg, 198504, Russia, 
nkk@math.spbu.ru.}
}
\date{}
\newtheorem{theorem}{Theorem}
\newtheorem{lemma}[theorem]{Lemma}
\theoremstyle{definition}
\newtheorem{example}{Example}
\begin{document}

\maketitle

\begin{abstract}
We consider a project that consists of activities to be performed in parallel under various temporal constraints, which include start-start, start-finish and finish-start precedence relationships, release times, deadlines, and due dates. Scheduling problems are formulated to find optimal schedules for the project with respect to different objective functions to be minimized, such as the project makespan, the maximum deviation from the due dates, the maximum flow-time, and the maximum deviation of finish times. We represent these problems as optimization problems in terms of tropical mathematics, and then solve them by applying direct solution methods of tropical optimization. As a result, new direct solutions of the scheduling problems are obtained in a compact vector form, which is ready for further analysis and practical implementation. The solutions are illustrated by simple numerical examples.
\\

\textbf{Key-Words:} idempotent semifield, optimization problem, project scheduling, precedence relationship, scheduling objective.
\\

\textbf{MSC (2010):} 65K10; 15A80; 65K05; 90C48; 90B35
\end{abstract}

\section{Introduction}

Tropical optimization problems, which are formulated and solved in the framework of tropical mathematics, find increasing use in various fields of operations research, including project scheduling. As an applied mathematical discipline concentrated on the theory and applications of semirings with idempotent addition, tropical mathematics dates back to a few seminal papers \cite{Pandit1961Anew,Cuninghamegreen1962Describing,Giffler1963Scheduling,Hoffman1963Onabstract,Vorobjev1963Theextremal,Romanovskii1964Asymptotic,Korbut1965Extremal}, including those \cite{Cuninghamegreen1962Describing,Giffler1963Scheduling} concerned with optimization problems drawn from machine scheduling.

In succeeding years, tropical mathematics was studied by many authors under various names, such as idempotent algebra, max-algebra, max-plus algebra, extremal algebra (see \cite{Baccelli1993Synchronization,Kolokoltsov1997Idempotent,Golan2003Semirings,Heidergott2006Maxplus,Akian2007Maxplus,Litvinov2007Themaslov,Gondran2008Graphs,Speyer2009Tropical,Butkovic2010Maxlinear} and references therein). Tropical optimization problems were investigated in a number of works, in which scheduling issues frequently helped to motivate and illustrate the study. Specifically, tropical mathematics (max-plus algebra) served as a solution framework for scheduling problems in \cite{Cuninghamegreen1979Minimax,Zimmermann1981Linear,Zimmermann1984Some,Baccelli1993Synchronization,Zimmermann2003Disjunctive,Bouquard2006Application,Fiedler2006Linear,Heidergott2006Maxplus,Butkovic2009Onsome,Butkovic2010Maxlinear,Houssin2011Cyclic,Aminu2012Nonlinear}.

Many optimization problems are formulated in the tropical mathematics setting to minimize nonlinear functions defined on vectors over idempotent semifields (semirings with multiplicative inverses), subject to constraints given by vector equations and inequalities (see, e.g., overviews in \cite{Krivulin2014Tropical,Krivulin2015Amultidimensional}). For some problems, direct solutions are obtained in a closed form under general assumptions. Other problems are solved algorithmically by using iterative computational procedures.

This paper examines problems that are drawn from time-constrained project scheduling, which involves the planning of activities in a project over time to achieve certain objectives \cite{Demeulemeester2002Project,Neumann2003Project,Tkindt2006Multicriteria,Vanhoucke2012Project}. The aim of the study is to provide new solutions to the problems on the basis of recent results in tropical optimization.

We consider scheduling problems, which are to find an optimal schedule for a project that consists of a set of activities operating in parallel under various temporal constraints, including start-start, start-finish, finish-start, release time, deadline, and due-date constraints. As optimization criteria to minimize, we take the project makespan, the maximum deviation from due dates, the maximum flow-time, and the maximum deviation of finish times. Such problems are known to have algorithmic solutions in the form of iterative computational procedures. Specifically, many problems can be formulated and solved as linear, integer or mixed-integer linear programs by appropriate mathematical programming algorithms. Examples of the algorithmic solutions can be found in \cite{Pritsker1969Azeroone,Dereyck1998Abranchandbound,Artigues2000Apolynomial} (see also reviews in \cite{Demeulemeester2002Project,Neumann2003Project,Tkindt2006Multicriteria,Vanhoucke2012Project}).

We represent the scheduling problems as tropical optimization problems, which are then solved by applying solution methods developed in \cite{Krivulin2014Aconstrained,Krivulin2015Amultidimensional,Krivulin2015Extremal}. We derive new direct solutions to the scheduling problems considered, which, in contrast to the conventional algorithmic solutions, provide results in a compact explicit vector form, ready for further analysis and applications, and thus have the potential to complement and supplement existing approaches. These solutions allow various constraints to be incorporated in a unified and constructive way. The calculation of the solutions involves simple matrix and vector computations according to explicit formulae, which offers a basis for the development of efficient computational algorithms and software implementation.

The current paper further extends and improve the results presented in the conference paper \cite{Krivulin2015Tropicaloptimization} in an effort to incorporate illustrative numerical examples, discuss computational complexity of solutions, enhance formulae, and refine the reference list. In the paper, we continue the research on the application of tropical optimization to scheduling problems reported in journal and conference publications \cite{Krivulin2012Algebraic,Krivulin2014Aconstrained,Krivulin2015Amultidimensional,Krivulin2015Extremal,Krivulin2015Tropical}. We examine more complicated general scheduling problems with new objective functions introduced and additional temporal constraints imposed. We show how tropical optimization techniques can be applied to these problems, which results in new solutions in both tropical optimization and project scheduling. 

The rest of the paper is organized as follows. In Section~\ref{S-PSMEP}, we describe scheduling problems that motivate and illustrate the study, and formulate these problems by using the conventional  notation. Section~\ref{S-PADR} includes a brief overview of preliminary definitions and results of tropical mathematics to be used in the subsequent sections. In Section~\ref{S-TOP}, we present some tropical optimization problems and their solutions, and discuss the computational complexity of the solutions. In Section~\ref{S-APT}, we first rewrite the scheduling problems as tropical optimization problems, and then solve them by applying the results from Section~\ref{S-TOP}. The solutions are illustrated by simple, but representative, numerical examples.

\section{Project scheduling model and example problems}
\label{S-PSMEP}

We start with the description of a project scheduling model in a general form, and then present example problems to find optimal schedules. To facilitate the subsequent representation of the problems in terms of tropical mathematics, we employ a somewhat different notation than that commonly adopted in the literature (see, e.g., \cite{Demeulemeester2002Project,Neumann2003Project,Tkindt2006Multicriteria,Vanhoucke2012Project} for further details and standard notations of project scheduling). 

Consider a project that consists of $n$ activities operating in parallel under start-start, start-finish and finish-start precedence relations, due dates, and time boundaries for start and finish times in the form of release time, release deadline and deadline constraints. To describe the temporal constraints and scheduling objectives under consideration, we use the symbols $x_{i}$ and $y_{i}$, which, respectively, represent the unknown start and finish times for each activity $i=1,\ldots,n$.

\subsection{Temporal constraints}

We now examine constraints imposed on the start and finish times of each activity $i=1,\ldots,n$. First, we represent precedence relations, which link activity $i$ with other activities. Let $a_{ij}$ be the minimum possible time lag between the start of activity $j$ and the finish of $i$. The time lag $a_{ii}$ specifies the minimum duration of the activity (the duration provided that no other constraints are imposed). Note that the value $-a_{ij}$ can be interpreted as the maximum time lag between the finish of $j$ and the start of $i$. If there is no lag defined, we assume $a_{ij}=-\infty$.

The start-finish constraints take the form of the inequalities $a_{ij}+x_{j}\leq y_{i}$ holding for all $j=1,\ldots,n$. The activity is assumed to finish immediately after all related start-finish constraints are satisfied, and thus at least one of the inequalities must hold as an equality. Then, these inequalities combine to give one inequality, which, under the above assumption of immediate finish, is equivalent to the equality
\begin{equation*}
\max_{1\leq j\leq n}(a_{ij}+x_{j})
=
y_{i}.
\end{equation*}

Furthermore, we use the notation $b_{ij}$ to describe the minimum time lag between the start of activity $j$ and the start of $i$, and put $b_{ij}=-\infty$ if the lag is not specified. The start-start constraints are given by the inequalities $b_{ij}+x_{j}\leq x_{i}$ for all $j$, which can readily be rewritten as one inequality
\begin{equation*}
\max_{1\leq j\leq n}(b_{ij}+x_{j})
\leq
x_{i}.
\end{equation*}

Let the minimum time lag between the finish of activity $j$ and the start of $i$ be denoted by $c_{ij}$, with $c_{ij}=-\infty$ if undefined. The finish-start constraints are written as the inequalities $c_{ij}+y_{j}\leq x_{i}$ for all $j$, or as one inequality
\begin{equation*}
\max_{1\leq j\leq n}(c_{ij}+y_{j})
\leq
x_{i}.
\end{equation*}

Finally, we introduce due dates and time boundary constraints. The due date indicates the time when the activity is ideally expected to finish. Since the due date may be unachievable under other constraints, it is considered as not a strict constraint. For activity $i$, we denote the due date by $d_{i}$.

Let $g_{i}$ and $h_{i}$ be the earliest and latest possible times to start, and $f_{i}$ be the latest possible time to finish. The release time, release deadline, and deadline constraints provide strict lower and upper boundaries for the start and finish times, given by 
\begin{equation*}
g_{i}
\leq
x_{i}
\leq
h_{i},
\qquad
y_{i}
\leq
f_{i}.
\end{equation*}

\subsection{Optimization criteria}

To describe scheduling objectives, we use several criteria that commonly arise in the development of optimal schedules in practice. The criteria are written below in the form, which is ready for immediate translation into terms of tropical mathematics. 

We begin with the maximum absolute deviation of finish times of activities from due dates that a project should meet. The minimum value of this criterion corresponds to the least violation of the due dates, which can be attained. With the notation introduced above, the maximum deviation from the due dates is given by
\begin{equation*}
\max_{1\leq i\leq n}|y_{i}-d_{i}|
=
\max_{1\leq i\leq n}\max(y_{i}-d_{i},d_{i}-y_{i}).
\end{equation*}

Next, we consider the maximum deviation of completion times of all activities. The minimization of this criterion is equivalent to finding a schedule, where all activities have to finish simultaneously as much as possible. Such a problem can arise in just-in-time manufacturing, when certain delivery operations must be completed at once. The maximum deviation of completion times is written as 
\begin{equation*}
\max_{1\leq i\leq n}y_{i}
-
\min_{1\leq i\leq n}y_{i}
=
\max_{1\leq i\leq n}y_{i}
+
\max_{1\leq i\leq n}(-y_{i}).
\end{equation*}

The flow-time of an activity (also known as the system, throughput and turn-around time) is defined as the difference between its start and finish times, and can determine expenses related to undertaking the activity in a project. The flow-time of activity $i$ is bounded from below by the value of $a_{ii}$, which is commonly assumed to be nonnegative, and may be greater than $a_{ii}$ due to other temporal constraints.

In many real-world problems, the objective is formulated to minimize the maximum flow-time taken over all activities, and thus described by the expression
\begin{equation*}
\max_{1\leq i\leq n}(y_{i}-x_{i}).
\end{equation*}

Finally, we discuss the makespan, which is the interval between the earliest start time and the latest finish time of activities in a project. The makespan indicates the total duration of the project, and finds wide application as an objective function to be minimized in many scheduling problems. The makespan is given by
\begin{equation*}
\max_{1\leq i\leq n}y_{i}
-
\min_{1\leq i\leq n}x_{i}
=
\max_{1\leq i\leq n}y_{i}
+
\max_{1\leq i\leq n}(-x_{i}).
\end{equation*}

\subsection{Examples of scheduling problems}

We conclude with typical examples of scheduling problems, which are to serve to both motivate and illustrate the results in the rest of the paper. To formulate the problems, we use the notation and formulae introduced above for the unknown variables, given parameters, temporal constraints and scheduling objectives.

\subsubsection{Minimization of maximum deviation from due dates}

First, we consider a problem to minimize the maximum deviation from due dates under start-finish, start-start and finish-start constraints. Given the parameters $a_{ij}$, $b_{ij}$, $c_{ij}$ and $d_{i}$, the problem is to find the unknown start time $x_{i}$ and finish time $y_{i}$ for each activity $i=1,\ldots,n$, that 
\begin{equation}
\begin{aligned}
&
\text{minimize}
&&
\max_{1\leq i\leq n}\max(y_{i}-d_{i},d_{i}-y_{i}),
\\
&
\text{subject to}
&&
\max_{1\leq j\leq n}(a_{ij}+x_{j})
=
y_{i},
\quad
\max_{1\leq j\leq n}(b_{ij}+x_{j})
\leq
x_{i},
\\
&&&
\max_{1\leq j\leq n}(c_{ij}+y_{j})
\leq
x_{i},
\quad
i=1,\ldots,n.
\end{aligned}
\label{P-yididiyi-aijxjyi-bijxjxi-cijyjxi}
\end{equation}

\subsubsection{Minimization of maximum deviation of finish times}

We now formulate a problem of minimizing the maximum deviation of finish times, subject to start-finish, start-start, finish-start, and deadline constraints. Given the parameters $a_{ij}$, $b_{ij}$, $c_{ij}$ and $f_{i}$, we need to determine the unknowns $x_{i}$ and $y_{i}$ that 
\begin{equation}
\begin{aligned}
&
\text{minimize}
&&
\max_{1\leq i\leq n}y_{i}
+
\max_{1\leq i\leq n}(-y_{i}),
\\
&
\text{subject to}
&&
\max_{1\leq j\leq n}(a_{ij}+x_{j})
=
y_{i},
\quad
\max_{1\leq j\leq n}(b_{ij}+x_{j})
\leq
x_{i},
\\
&&&
\max_{1\leq j\leq n}(c_{ij}+y_{j})
\leq
x_{i},
\quad
y_{i}
\leq
f_{i},
\qquad
i=1,\ldots,n.
\end{aligned}
\label{P-yiyi-aijxjyi-bijxjxj-cijyjxi-yifi}
\end{equation}

\subsubsection{Minimization of maximum flow-time}

Next, we consider a problem of minimizing the maximum flow-time under start-finish, start-start, finish-start and release time constraints. Given the parameters $a_{ij}$, $b_{ij}$, $c_{ij}$ and $g_{i}$, we find the values of $x_{i}$ and $y_{i}$ that solve the problem
\begin{equation}
\begin{aligned}
&
\text{minimize}
&&
\max_{1\leq i\leq n}(y_{i}-x_{i}),
\\
&
\text{subject to}
&&
\max_{1\leq j\leq n}(a_{ij}+x_{j})
=
y_{i},
\quad
\max_{1\leq j\leq n}(b_{ij}+x_{j})
\leq
x_{i},
\\
&&&
\max_{1\leq j\leq n}(c_{ij}+y_{j})
\leq
x_{i},
\quad
g_{i}
\leq
x_{i},
\qquad
i=1,\ldots,n.
\end{aligned}
\label{P-yixi-aijxjyi-bijxjxi-cijyjxi-gixi}
\end{equation}

\subsubsection{Minimization of makespan}

Suppose that we need to minimize the makespan of a project subject to start-finish, release time, release deadline, and deadline temporal constraints. Given $a_{ij}$, $g_{i}$, $h_{i}$ and $f_{i}$, we find $x_{i}$ and $y_{i}$ that
\begin{equation}
\begin{aligned}
&
\text{minimize}
&&
\max_{1\leq i\leq n}y_{i}
+
\max_{1\leq i\leq n}(-x_{i}),
\\
&
\text{subject to}
&&
\max_{1\leq j\leq n}(a_{ij}+x_{j})
=
y_{i},
\\
&&&
g_{i}
\leq
x_{i}
\leq
h_{i},
\quad
y_{i}
\leq
f_{i},
\qquad
i=1,\ldots,n.
\end{aligned}
\label{P-yixi-aijxjyi-gixihi-yifi}
\end{equation}

Note that problems like those presented above can normally be solved using iterative computational algorithms (see, e.g., \cite{Demeulemeester2002Project,Neumann2003Project,Tkindt2006Multicriteria,Vanhoucke2012Project} for overviews of available solutions). Specifically, these problems can be formulated as linear programs to solve them by computational methods of linear programming, which generally offer algorithmic solutions. Below, we provide new solutions to the problems, which are based on optimization methods in tropical mathematics, and present results in a compact explicit vector form rather than in the form of a numerical algorithm.

\section{Preliminary algebraic definitions and results}
\label{S-PADR}

In this section, we give a brief overview of preliminary definitions, notation and results of tropical algebra to provide a formal basis for the description and application of tropical optimization problems in the next sections. Both introductory and advanced material on tropical mathematics can be found in many publications, including \cite{Baccelli1993Synchronization,Kolokoltsov1997Idempotent,Golan2003Semirings,Heidergott2006Maxplus,Akian2007Maxplus,Litvinov2007Themaslov,Gondran2008Graphs,Speyer2009Tropical,Butkovic2010Maxlinear} to name only a few. The overview given below is mainly based on the presentation of results in \cite{Krivulin2014Aconstrained,Krivulin2015Amultidimensional,Krivulin2015Extremal}, which offers a useful framework to obtain direct solutions to the problems under study in a compact vector form.
 
\subsection{Idempotent semifield}

Consider a system $(\mathbb{X},\oplus,\otimes,\mathbb{0},\mathbb{1})$, where $\mathbb{X}$ is a set, which is closed under addition $\oplus$ and multiplication $\otimes$ with zero $\mathbb{0}$ and identity $\mathbb{1}$, such that $(\mathbb{X},\oplus,\mathbb{0})$ is a commutative idempotent monoid, $(\mathbb{X}\setminus\{\mathbb{0}\},\otimes,\mathbb{1})$ is an Abelian group, multiplication is  distributive over addition, and $\mathbb{0}$ is absorbing for multiplication. This system is usually called the idempotent semifield.

Addition is idempotent, which means that $x\oplus x=x$ for each $x\in\mathbb{X}$. The idempotent addition induces on $\mathbb{X}$ a partial order such that $x\leq y$ if and only if $x\oplus y=y$. It follows directly from the definition that $x\leq x\oplus y$ and $y\leq x\oplus y$ for all $x,y\in\mathbb{X}$. Moreover, the inequality $x\oplus y\leq z$ appears to be equivalent to the two inequalities $x\leq z$ and $y\leq z$, and both addition and multiplication are monotone in each argument. Finally, the partial order extends to a linear order on $\mathbb{X}$.
 
Multiplication is invertible to let each nonzero $x\in\mathbb{X}$ have the inverse $x^{-1}$ such that $x\otimes x^{-1}=\mathbb{1}$. The inverse operation is antitone, which implies that, for all nonzero $x$ and $y$, the inequality $x\leq y$ yields $x^{-1}\geq y^{-1}$.

The power notation with integer exponents is routinely used to represent repeated multiplication, and defined as follows: $x^{0}=\mathbb{1}$, $x^{p}=x\otimes x^{p-1}$ and $x^{-p}=(x^{-1})^{p}$ for all nonzero $x$ and positive integer $p$. The integer powers are assumed to extend to rational exponents to make $\mathbb{X}$ algebraically complete.

In the algebraic expressions below, the multiplication sign $\otimes$ is omited to save writing, and the exponents are read in the sense of tropical algebra. 

An example of the idempotent semifield under consideration is the real semifield $\mathbb{R}_{\max,+}=(\mathbb{R}\cup\{-\infty\},\max,+,-\infty,0)$, in which the addition $\oplus$ is defined as maximum, and the multiplication $\otimes$ is as ordinary addition, with the zero $\mathbb{0}$ given by $-\infty$, and the identity $\mathbb{1}$ by $0$. Each number $x\in\mathbb{R}$ has the inverse $x^{-1}$ equal to the opposite number $-x$ in the conventional notation. For all $x,y\in\mathbb{R}$, the power $x^{y}$ is well-defined and coincides with the arithmetic product $xy$. The partial order induced by idempotent addition corresponds to the standard linear order on $\mathbb{R}$.

%Consider $\mathbb{R}_{\min,\times}=(\mathbb{R}_{+}\cup\{+\infty\},\min,\times,+\infty,1)$ as another example, where $\mathbb{R}_{+}$ is the set of positive real numbers, $\oplus=\min$, $\otimes=\times$, $\mathbb{0}=+\infty$ and $\mathbb{1}=1$. In this semifield, the notation of inverses and exponents has the standard interpretation. The partial order defined by addition extends to a linear order that is opposite to the standard order on $\mathbb{R}$. 

\subsection{Matrices and vectors}

We now examine matrices and vectors over the idempotent semifield introduced above. The set of matrices that have $m$ rows and $n$ columns with entries from $\mathbb{X}$ is denoted $\mathbb{X}^{m\times n}$. A matrix with all entries equal to $\mathbb{0}$ is the zero matrix. A matrix is called row-regular (column-regular), if it has no rows (columns) that consist entirely of $\mathbb{0}$. Provided that a matrix is both row- and column-regular, it is regular.

For any matrices $\bm{A},\bm{B}\in\mathbb{X}^{m\times n}$ and $\bm{C}\in\mathbb{X}^{n\times l}$, and a scalar $x\in\mathbb{X}$, the matrix addition, matrix multiplication and scalar multiplication follow the standard rules with the scalar operations $\oplus$ and $\otimes$ in the place of the ordinary addition and multiplication, and are given by the formulae
\begin{gather*}
\{\bm{A}\oplus\bm{B}\}_{ij}
=
\{\bm{A}\}_{ij}\oplus\{\bm{B}\}_{ij},
\qquad
\{\bm{A}\bm{C}\}_{ij}
=
\bigoplus_{k=1}^{n}\{\bm{A}\}_{ik}\{\bm{C}\}_{kj},
\\
\{x\bm{A}\}_{ij}
=
x\{\bm{A}\}_{ij}.
\end{gather*}

The partial order associated with the idempotent addition and its properties extend to the matrices, where the relations are expanded entry-wise.

For any matrix $\bm{A}=(a_{ij})\in\mathbb{X}^{m\times n}$, the transpose is the matrix $\bm{A}^{T}\in\mathbb{X}^{n\times m}$.

The multiplicative conjugate transpose of $\bm{A}$ is the matrix $\bm{A}^{-}=(a_{ij}^{-})\in\mathbb{X}^{n\times m}$ with the entries $a_{ij}^{-}=a_{ji}^{-1}$ if $a_{ji}\ne\mathbb{0}$, and $a_{ij}^{-}=\mathbb{0}$ otherwise.

Consider square matrices of order $n$ in the set $\mathbb{X}^{n\times n}$. A matrix having the diagonal entries equal to $\mathbb{1}$, and the off-diagonal entries to $\mathbb{0}$, is the identity matrix denoted $\bm{I}$. The power notation with nonnegative integer exponents serves to represent iterated products as follows: $\bm{A}^{0}=\bm{I}$ and $\bm{A}^{p}=\bm{A}^{p-1}\bm{A}$ for any matrix $\bm{A}$ and integer $p>0$.

For any matrix $\bm{A}=(a_{ij})\in\mathbb{X}^{n\times n}$, the trace is given by
\begin{equation*}
\mathop\mathrm{tr}\bm{A}
=
a_{11}\oplus\cdots\oplus a_{nn}
=
\bigoplus_{i=1}^{n}a_{ii}.
\end{equation*}

For any matrices $\bm{A}$ and $\bm{B}$, and scalar $x$, the following identities are valid:
$$
\mathop\mathrm{tr}(\bm{A}\oplus\bm{B})
=
\mathop\mathrm{tr}\bm{A}
\oplus
\mathop\mathrm{tr}\bm{B},
\qquad
\mathop\mathrm{tr}(\bm{A}\bm{B})
=
\mathop\mathrm{tr}(\bm{B}\bm{A}),
\qquad
\mathop\mathrm{tr}(x\bm{A})
=
x\mathop\mathrm{tr}\bm{A}.
$$

Every matrix having only one row (column) is considered a row (column) vector. All vectors below are column vectors unless otherwise specified. The column vectors of order $n$ form the set $\mathbb{X}^{n}$. A vector with all elements equal to $\mathbb{0}$ is the zero vector. If a vector has no zero elements, it is regular. The vector of all ones is $\bm{1}=(\mathbb{1},\ldots,\mathbb{1})^{T}$.

Let $\bm{A}\in\mathbb{X}^{n\times n}$ be a row-regular matrix and $\bm{x}\in\mathbb{X}^{n}$ a regular vector. Then, the vector $\bm{A}\bm{x}$ is regular. If $\bm{A}$ is column-regular, then the row vector $\bm{x}^{T}\bm{A}$ is regular.

The multiplicative conjugate transpose of a nonzero column vector $\bm{x}=(x_{i})\in\mathbb{X}^{n}$ is a row vector $\bm{x}^{-}=(x_{i}^{-})$ with the entries $x_{i}^{-}=x_{i}^{-1}$ if $x_{i}\ne\mathbb{0}$, and $x_{i}=\mathbb{0}$ otherwise.

The conjugate transposition has the following useful properties. First, for any nonzero vector $\bm{x}$, the equality $\bm{x}^{-}\bm{x}=\mathbb{1}$ is valid.

Furthermore, if $\bm{x}$ and $\bm{y}$ are regular vectors of the same order, then the element-wise inequality $\bm{x}\leq\bm{y}$ is equivalent to $\bm{x}^{-}\geq\bm{y}^{-}$. In addition, the matrix inequality $\bm{x}\bm{y}^{-}\geq(\bm{x}^{-}\bm{y})^{-1}\bm{I}$ holds, and becomes the inequality $\bm{x}\bm{x}^{-}\geq\bm{I}$ when $\bm{y}=\bm{x}$.

Finally, consider a square matrix $\bm{A}\in\mathbb{X}^{n\times n}$. A scalar $\lambda\in\mathbb{X}$ is an eigenvalue of $\bm{A}$, if there exists a nonzero vector $\bm{x}\in\mathbb{X}^{n}$ to satisfy the equality $\bm{A}\bm{x}=\lambda\bm{x}$. The maximum eigenvalue of the matrix $\bm{A}$ is called the spectral radius, and given by
\begin{equation*}
\lambda
=
\mathop\mathrm{tr}\bm{A}
\oplus
\cdots
\oplus
\mathop\mathrm{tr}\nolimits^{1/n}(\bm{A}^{n})
=
\bigoplus_{k=1}^{n}\mathop\mathrm{tr}\nolimits^{1/k}(\bm{A}^{k}).
\end{equation*}

\subsection{Solution to linear inequalities}

We conclude the overview of the preliminary results with the solutions to vector inequalities, which arise in the literature in different settings and have solutions given by many authors in various forms (see, e.g., \cite{Cuninghamegreen1979Minimax,Baccelli1993Synchronization}). Below, we describe explicit solutions represented in a compact closed form that provides a unified framework for the systematic analysis of optimization problems in what follows.

Suppose that, given a matrix $\bm{A}\in\mathbb{X}^{m\times n}$ and a regular vector $\bm{d}\in\mathbb{X}^{m}$, we find vectors $\bm{x}\in\mathbb{X}^{n}$ to solve the inequality
\begin{equation}
\bm{A}\bm{x}
\leq
\bm{d}.
\label{I-Ax-d}
\end{equation}

A direct solution, which uses algebraic properties of the multiplicative conjugate transposition to reduce \eqref{I-Ax-d} to an equivalent inequality, is obtained in \cite{Krivulin2015Extremal} as follows.
\begin{lemma}
\label{L-Ax-d}
For any column-regular matrix $\bm{A}$ and regular vector $\bm{d}$, all solutions to inequality \eqref{I-Ax-d} are given by
\begin{equation*}
\bm{x}
\leq
(\bm{d}^{-}\bm{A})^{-}.
\label{I-xd-A}
\end{equation*}
\end{lemma}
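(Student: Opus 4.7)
The plan is to prove the equivalence $\bm{A}\bm{x}\leq\bm{d} \Longleftrightarrow \bm{x}\leq(\bm{d}^-\bm{A})^-$ by a short chain of reversible transformations rather than proving each direction independently.

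First I would exploit the regularity of $\bm{d}$ to rescale the inequality. Since every entry $d_i$ is nonzero, one has $\bm{d}^-\bm{d}=\mathbb{1}$, and the entrywise criterion for idempotent sums shows that $\bm{A}\bm{x}\leq\bm{d}$ is equivalent to $\bm{d}^-(\bm{A}\bm{x})\leq\mathbb{1}$: one direction is the monotonicity of multiplication, while the other follows by observing that $\bigoplus_{i}d_i^{-1}(\bm{A}\bm{x})_i\leq\mathbb{1}$ forces each summand $d_i^{-1}(\bm{A}\bm{x})_i\leq\mathbb{1}$, which after multiplication by $d_i$ becomes $(\bm{A}\bm{x})_i\leq d_i$.

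Next I would use associativity to rewrite $\bm{d}^-(\bm{A}\bm{x})=(\bm{d}^-\bm{A})\bm{x}$, setting $\bm{u}=\bm{d}^-\bm{A}$. The regularity of $\bm{d}^-$ together with the column-regularity of $\bm{A}$ guarantees, via the fact recorded earlier in the excerpt, that $\bm{u}$ has no zero entry. Then the scalar inequality $\bm{u}\bm{x}\leq\mathbb{1}$ unfolds to $\bigoplus_{i}u_i x_i\leq\mathbb{1}$, which by the same entrywise splitting is equivalent to $u_i x_i\leq\mathbb{1}$ for every $i$. Multiplying each of these by the well-defined inverse $u_i^{-1}$ converts them into $x_i\leq u_i^{-1}=(\bm{u}^-)_i$, that is, $\bm{x}\leq(\bm{d}^-\bm{A})^-$. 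Chaining the equivalences then gives the lemma.

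The main point that requires care is verifying that each step is genuinely reversible: the rescaling by $\bm{d}^-$ is an \emph{iff} only because $\bm{d}$ has no zero entry, and the passage from the scalar inequality $\bm{u}\bm{x}\leq\mathbb{1}$ to the componentwise bounds $x_i\leq u_i^{-1}$ hinges on $\bm{u}$ having no zero entry, which is precisely why column-regularity of $\bm{A}$ is part of the hypothesis. Both reductions rest on the basic characterization $\bigoplus_{i}a_i\leq z$ iff $a_i\leq z$ for all $i$, together with the antitone behaviour of inversion on nonzero elements, so I do not expect any serious obstacle beyond keeping these two hypotheses explicit throughout the chain.
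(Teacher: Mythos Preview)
Your proposal is correct and matches the approach the paper indicates: the paper does not include a full proof of this lemma but cites it from \cite{Krivulin2015Extremal}, noting only that the argument ``uses algebraic properties of the multiplicative conjugate transposition to reduce \eqref{I-Ax-d} to an equivalent inequality.'' Your chain of reversible steps---rescaling by $\bm{d}^{-}$, rewriting as $(\bm{d}^{-}\bm{A})\bm{x}\leq\mathbb{1}$, and inverting componentwise using the regularity of $\bm{d}^{-}\bm{A}$---is precisely such a reduction, and your care in tracking where the regularity hypotheses on $\bm{d}$ and the column-regularity of $\bm{A}$ are used is exactly what is needed.
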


Furthermore, we consider the problem: given a matrix $\bm{A}\in\mathbb{X}^{n\times n}$, find regular vectors $\bm{x}\in\mathbb{X}^{n}$ that satisfy the inequality
\begin{equation}
\bm{A}\bm{x}
\leq
\bm{x}.
\label{I-Ax-x}
\end{equation}

To represent a solution to the problem for any matrix $\bm{A}\in\mathbb{X}^{n\times n}$, we define a function that takes $\bm{A}$ to the scalar
\begin{equation*}
\mathop\mathrm{Tr}(\bm{A})
=
\mathop\mathrm{tr}\bm{A}\oplus\cdots\oplus\mathop\mathrm{tr}\bm{A}^{n}
=
\bigoplus_{k=1}^{n}
\mathop\mathrm{tr}\bm{A}^{k},
\end{equation*}
and make use of the asterisk operator (also known as the Kleene star), which maps $\bm{A}$ with $\mathop\mathrm{Tr}(\bm{A})\leq\mathbb{1}$ to the matrix
\begin{equation*}
\bm{A}^{\ast}
=
\bm{I}\oplus\bm{A}\oplus\cdots\oplus\bm{A}^{n-1}
=
\bigoplus_{k=0}^{n-1}\bm{A}^{k}.
\end{equation*}

The next result, based on the properties of the asterisk operator, is derived in \cite{Krivulin2015Extremal} to offer a complete solution to inequality \eqref{I-Ax-x} (see also \cite{Krivulin2006Solution,Krivulin2015Amultidimensional}).
\begin{theorem}
\label{T-Ax-x}
For any matrix $\bm{A}$, the following statements hold:
\begin{enumerate}
\item If $\mathop\mathrm{Tr}(\bm{A})\leq\mathbb{1}$, then all regular solutions to \eqref{I-Ax-x} are given by $\bm{x}=\bm{A}^{\ast}\bm{u}$, where $\bm{u}$ is any regular vector.
\item If $\mathop\mathrm{Tr}(\bm{A})>\mathbb{1}$, then there is no regular solution.
\end{enumerate}
\end{theorem}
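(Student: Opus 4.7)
The plan is to handle the two statements separately, with the heart of the argument concentrated on the characterization of the solution set when $\mathop\mathrm{Tr}(\bm{A})\leq\mathbb{1}$. For the first statement, I would split the work into a forward direction (every $\bm{x}=\bm{A}^{\ast}\bm{u}$ with regular $\bm{u}$ is a regular solution) and a reverse direction (every regular solution $\bm{x}$ admits such a representation, namely with $\bm{u}=\bm{x}$). The second statement will follow by a short contrapositive argument using monotonicity of powers of $\bm{A}$.

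The key algebraic lemma I would establish first is the inequality $\bm{A}\bm{A}^{\ast}\leq\bm{A}^{\ast}$ under the hypothesis $\mathop\mathrm{Tr}(\bm{A})\leq\mathbb{1}$. Since $\bm{A}\bm{A}^{\ast}=\bm{A}\oplus\bm{A}^{2}\oplus\cdots\oplus\bm{A}^{n}$, this reduces to showing $\bm{A}^{n}\leq\bm{A}^{\ast}$, and more generally that $\bm{A}^{k}\leq\bm{A}^{\ast}$ for every $k\geq 0$. I would prove this by interpreting $(\bm{A}^{k})_{ij}$ as the maximum weight of a length-$k$ walk from $i$ to $j$ in the digraph weighted by $\bm{A}$: whenever $k\geq n$, such a walk contains a cycle, whose weight appears as a summand in some $\mathop\mathrm{tr}(\bm{A}^{l})\leq\mathbb{1}$, so we may excise the cycle without increasing the weight and reduce to a walk of length less than $n$. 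Once $\bm{A}\bm{A}^{\ast}\leq\bm{A}^{\ast}$ is in hand, the forward direction is immediate: for $\bm{x}=\bm{A}^{\ast}\bm{u}$ we compute $\bm{A}\bm{x}=\bm{A}\bm{A}^{\ast}\bm{u}\leq\bm{A}^{\ast}\bm{u}=\bm{x}$, and regularity of $\bm{x}$ follows from $\bm{A}^{\ast}\geq\bm{I}$ and regularity of $\bm{u}$.

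For the reverse direction, I would iterate the inequality: from $\bm{A}\bm{x}\leq\bm{x}$, monotonicity of matrix multiplication yields $\bm{A}^{k}\bm{x}\leq\bm{x}$ for every $k\geq 1$, and combining these gives $\bm{A}^{\ast}\bm{x}\leq\bm{x}$. The opposite inequality $\bm{A}^{\ast}\bm{x}\geq\bm{x}$ is free from $\bm{A}^{\ast}\geq\bm{I}$, hence $\bm{x}=\bm{A}^{\ast}\bm{x}$ and $\bm{u}:=\bm{x}$ works. For the second statement, I would take a hypothetical regular solution $\bm{x}$ to \eqref{I-Ax-x} and extract diagonal information from $\bm{A}^{k}\bm{x}\leq\bm{x}$: looking at the $i$-th component gives $(\bm{A}^{k})_{ii}\,x_{i}\leq x_{i}$, and since $x_{i}$ is nonzero and invertible this forces $(\bm{A}^{k})_{ii}\leq\mathbb{1}$ for each $i$ and each $k=1,\ldots,n$, so $\mathop\mathrm{tr}(\bm{A}^{k})\leq\mathbb{1}$ for each such $k$ and therefore $\mathop\mathrm{Tr}(\bm{A})\leq\mathbb{1}$. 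The contrapositive is exactly the second statement.

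I expect the combinatorial cycle-removal step proving $\bm{A}^{n}\leq\bm{A}^{\ast}$ to be the main obstacle: it is where the hypothesis $\mathop\mathrm{Tr}(\bm{A})\leq\mathbb{1}$ does its real work, and it must be stated with enough care that cycles of every length up to $n$ are handled simultaneously (not just the diagonal entries $a_{ii}\leq\mathbb{1}$). Everything else — iteration of the inequality, the identities $\bm{A}^{\ast}\geq\bm{I}$ and $\bm{x}^{-}\bm{x}=\mathbb{1}$, and componentwise comparison — is routine given the preliminaries already assembled in the paper.
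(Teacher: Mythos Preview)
The paper does not actually prove this theorem: it is quoted in the preliminaries section as a known result, with the sentence ``The next result, based on the properties of the asterisk operator, is derived in \cite{Krivulin2015Extremal} to offer a complete solution to inequality \eqref{I-Ax-x} (see also \cite{Krivulin2006Solution,Krivulin2015Amultidimensional}).'' There is therefore no in-paper proof to compare against.

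That said, your proposal is a correct and standard argument for this result. The cycle-removal step establishing $\bm{A}^{n}\leq\bm{A}^{\ast}$ under $\mathop\mathrm{Tr}(\bm{A})\leq\mathbb{1}$ is exactly the right place to spend effort, and your description of it is accurate: a walk of length at least $n$ must revisit a vertex, the weight of the resulting cycle is a summand of some $\mathop\mathrm{tr}(\bm{A}^{l})$ and hence $\leq\mathbb{1}$, so excising the cycle does not decrease the walk weight and produces a strictly shorter walk. The reverse direction via $\bm{x}=\bm{A}^{\ast}\bm{x}$ and the contrapositive argument for statement~2 are both clean and correct. One small wording point: in the forward direction you say ``we may excise the cycle without increasing the weight,'' but what you need (and what actually holds) is that excision does not \emph{decrease} the weight of the walk, so that the shorter walk dominates; you clearly have the right inequality in mind, just be careful with the phrasing when you write it out.
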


The results, given by Lemma~\ref{L-Ax-d} and Theorem~\ref{T-Ax-x}, provide necessary instruments for the solutions of optimization problems presented in the next section, as well as for the application of the solutions to scheduling problems in the last section.

\section{Tropical optimization problems}
\label{S-TOP}

Tropical optimization problems present an area in tropical mathematics, which is of both theoretical interest and practical importance (see, e.g., \cite{Cuninghamegreen1979Minimax,Zimmermann1981Linear,Butkovic2010Maxlinear,Krivulin2014Tropical} for further details and overviews). Many problems are formulated in the framework of tropical mathematics to minimize nonlinear functions defined on vectors over idempotent semifields and calculated using multiplicative conjugate transposition of vectors. These problems may have constraints, which are given by vector equations and inequalities. There are problems that can be solved directly in a general setting. For other problems, only algorithmic solutions are known, which offer an iterative computational scheme to produce a solution, or signify that no solutions exist. 

The purpose of this section is twofold: first, to offer representative examples to demonstrate a variety of optimization problems under study, and second, to provide an efficient basis for the solution of scheduling problems in the next section. We consider examples of both unconstrained and constrained optimization problems with different objective functions defined in the common setting in terms of a general idempotent semifield. For all problems, direct solutions are given in a compact vector form ready for further analysis and straightforward computations. For some problems, the solutions obtained are complete solutions.

\subsection{Examples of optimization problems}

We begin with the following problem. Suppose that, given a matrix $\bm{A}\in\mathbb{X}^{m\times n}$ and a vector $\bm{d}\in\mathbb{X}^{m}$, we need to find regular vectors $\bm{x}\in\mathbb{X}^{n}$ that
\begin{equation}
\begin{aligned}
&
\text{minimize}
&&
\bm{d}^{-}\bm{A}\bm{x}\oplus(\bm{A}\bm{x})^{-}\bm{d}.
\end{aligned}
\label{P-dAxAxd}
\end{equation}

The next statement offers a direct algebraic solution to the problem, which is obtained by deriving a strict lower bound for the objective function together with a particular solution that yields the bound (see, e.g., \cite{Krivulin2012Asolution}).
\begin{theorem}
\label{T-dAxAxd}
Let $\bm{A}$ be a row-regular matrix and $\bm{d}$ a regular vector. Then, the minimum value in problem \eqref{P-dAxAxd} is equal to
\begin{equation*}
\Delta
=
((\bm{A}(\bm{d}^{-}\bm{A})^{-})^{-}\bm{d})^{1/2},
\end{equation*}
and the maximum solution is given by
\begin{equation*}
\bm{x}
=
\Delta(\bm{d}^{-}\bm{A})^{-}.
\end{equation*}
\end{theorem}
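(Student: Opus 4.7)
The plan is to treat the scalar $\mu$ with $f(\bm{x}) = \bm{d}^{-}\bm{A}\bm{x}\oplus(\bm{A}\bm{x})^{-}\bm{d} \leq \mu$ as a parameter, turn the objective-bounded inequality into a two-sided sandwich on $\bm{A}\bm{x}$, and then use Lemma~\ref{L-Ax-d} to extract a sharp lower bound on $\mu$ together with a vector that realizes it.

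First I would observe, componentwise, that $\bm{d}^{-}\bm{A}\bm{x}\leq\mu$ is equivalent to $\bm{A}\bm{x}\leq\mu\bm{d}$, and that $(\bm{A}\bm{x})^{-}\bm{d}\leq\mu$ is equivalent to $\mu^{-1}\bm{d}\leq\bm{A}\bm{x}$; both follow from the scalar equivalence $u^{-1}v\leq\mu \Leftrightarrow v\leq\mu u$ applied in each coordinate. Combining them, $f(\bm{x})\leq\mu$ holds if and only if
\begin{equation*}
\mu^{-1}\bm{d}
\leq
\bm{A}\bm{x}
\leq
\mu\bm{d}.
\end{equation*}

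Next, I would apply Lemma~\ref{L-Ax-d} to the upper bound $\bm{A}\bm{x}\leq\mu\bm{d}$ to obtain $\bm{x}\leq\mu(\bm{d}^{-}\bm{A})^{-}$, whence $\bm{A}\bm{x}\leq\mu\bm{A}(\bm{d}^{-}\bm{A})^{-}$ for every such $\bm{x}$. Existence of an $\bm{x}$ also meeting the lower bound then forces $\mu^{-1}\bm{d}\leq\mu\bm{A}(\bm{d}^{-}\bm{A})^{-}$, which, by the same coordinate criterion read backwards, becomes $(\bm{A}(\bm{d}^{-}\bm{A})^{-})^{-}\bm{d}\leq\mu^{2}$, giving $\mu\geq\Delta$.

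To verify that the bound is attained I would substitute $\bm{x}=\Delta(\bm{d}^{-}\bm{A})^{-}$ and evaluate both terms directly: the identity $\bm{z}^{-}\bm{z}=\mathbb{1}$ with $\bm{z}=(\bm{d}^{-}\bm{A})^{-}$ yields $\bm{d}^{-}\bm{A}\bm{x}=\Delta$, and the defining relation $\Delta^{2}=(\bm{A}(\bm{d}^{-}\bm{A})^{-})^{-}\bm{d}$ yields $(\bm{A}\bm{x})^{-}\bm{d}=\Delta^{-1}\Delta^{2}=\Delta$, so $f(\bm{x})=\Delta$. Maximality follows automatically, since any optimal $\bm{x}$ must satisfy $\bm{A}\bm{x}\leq\Delta\bm{d}$, and Lemma~\ref{L-Ax-d} then gives $\bm{x}\leq\Delta(\bm{d}^{-}\bm{A})^{-}$. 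The main obstacle I anticipate is the second step: recognizing that the tightest lower bound on $\mu$ is obtained by pitting the left-hand inequality against the \emph{largest} $\bm{x}$ compatible with the right-hand inequality, which is precisely the extremal solution supplied by Lemma~\ref{L-Ax-d}; once this is clear, the remaining work is routine manipulation of multiplicative conjugate transposes.
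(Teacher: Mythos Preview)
Your argument is correct and matches the method the paper itself indicates: the paper does not give a detailed proof of Theorem~\ref{T-dAxAxd} but states that it ``is obtained by deriving a strict lower bound for the objective function together with a particular solution that yields the bound,'' which is exactly your two-step strategy of bounding $\mu$ from below via the sandwich $\mu^{-1}\bm{d}\leq\bm{A}\bm{x}\leq\mu\bm{d}$ and then exhibiting $\bm{x}=\Delta(\bm{d}^{-}\bm{A})^{-}$ as an achiever. One minor technical caveat: your invocation of Lemma~\ref{L-Ax-d} (and of the identity $\bm{z}^{-}\bm{z}=\mathbb{1}$ for $\bm{z}=(\bm{d}^{-}\bm{A})^{-}$) implicitly uses that $\bm{d}^{-}\bm{A}$ is a regular row vector, i.e., that $\bm{A}$ is column-regular, whereas the theorem as stated assumes only row-regularity; this is a harmless gap shared by the statement itself (without column-regularity the proposed $\bm{x}$ would fail to be regular), and in the scheduling applications of the paper $\bm{A}$ always has nonzero diagonal and is fully regular.
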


Furthermore, suppose that, given matrices $\bm{A},\bm{B}\in\mathbb{X}^{m\times n}$ and vectors $\bm{p},\bm{q}\in\mathbb{X}^{m}$, the purpose is to obtain regular vectors $\bm{x}\in\mathbb{X}^{n}$ to solve the problem
\begin{equation}
\begin{aligned}
&
\text{minimize}
&&
\bm{q}^{-}\bm{B}\bm{x}(\bm{A}\bm{x})^{-}\bm{p}.
\end{aligned}
\label{P-qBxAxp}
\end{equation}

A direct solution to the problem can be found, using a similar technique of evaluating a strict lower bound as above, in the following form \cite{Krivulin2013Explicit}.
\begin{theorem}
\label{T-qBxAxp}
Let $\bm{A}$ be row-regular and $\bm{B}$ column-regular matrices, $\bm{p}$ be nonzero and $\bm{q}$ regular vectors. Then, the minimum value in problem \eqref{P-qBxAxp} is equal to
\begin{equation*}
\Delta
=
(\bm{A}(\bm{q}^{-}\bm{B})^{-})^{-}\bm{p},
\end{equation*}
and attained at any vector
\begin{equation*}
\bm{x}
=
\alpha(\bm{q}^{-}\bm{B})^{-},
\qquad
\alpha>\mathbb{0}.
\end{equation*}
\end{theorem}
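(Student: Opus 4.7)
The plan is to derive $\Delta$ as a sharp lower bound on the objective $\bm{q}^{-}\bm{B}\bm{x}(\bm{A}\bm{x})^{-}\bm{p}$ and to verify that this bound is attained on the one-parameter family $\bm{x}=\alpha(\bm{q}^{-}\bm{B})^{-}$, so that the minimum value and the form of the optimizers follow simultaneously. The crucial structural observation is that the objective is invariant under rescaling $\bm{x}\mapsto\alpha\bm{x}$ by any positive scalar $\alpha$: the factor $\bm{q}^{-}\bm{B}\bm{x}$ scales by $\alpha$, while $(\bm{A}\bm{x})^{-}\bm{p}$ scales by $\alpha^{-1}$, so their product is unchanged.

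To obtain the lower bound, I would fix an arbitrary regular $\bm{x}$, set $\beta=\bm{q}^{-}\bm{B}\bm{x}$, and pass to the rescaled vector $\bm{y}=\beta^{-1}\bm{x}$, which satisfies $\bm{q}^{-}\bm{B}\bm{y}=\mathbb{1}$ and in particular $\bm{q}^{-}\bm{B}\bm{y}\leq\mathbb{1}$. Viewing $\bm{q}^{-}\bm{B}$ as a column-regular $1\times n$ matrix (its entries are nonzero because $\bm{B}$ is column-regular and $\bm{q}$ is regular) and $\mathbb{1}$ as a regular right-hand side, Lemma~\ref{L-Ax-d} yields the componentwise bound $\bm{y}\leq(\bm{q}^{-}\bm{B})^{-}$. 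Row-regularity of $\bm{A}$ keeps $\bm{A}\bm{y}$ and $\bm{A}(\bm{q}^{-}\bm{B})^{-}$ regular, hence antitonicity of the conjugate transpose gives $(\bm{A}\bm{y})^{-}\geq(\bm{A}(\bm{q}^{-}\bm{B})^{-})^{-}$, and monotonicity of matrix multiplication by $\bm{p}$ on the right then yields $(\bm{A}\bm{y})^{-}\bm{p}\geq\Delta$. Unwinding the rescaling, $\bm{q}^{-}\bm{B}\bm{x}(\bm{A}\bm{x})^{-}\bm{p}=\beta(\bm{A}\bm{x})^{-}\bm{p}=(\bm{A}\bm{y})^{-}\bm{p}\geq\Delta$.

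For attainment, I would substitute $\bm{x}_{0}=\alpha(\bm{q}^{-}\bm{B})^{-}$ with $\alpha>\mathbb{0}$. Writing $\bm{r}=\bm{q}^{-}\bm{B}$ and using $\bm{r}\bm{r}^{-}=\mathbb{1}$ gives $\bm{q}^{-}\bm{B}\bm{x}_{0}=\alpha$, while $(\bm{A}\bm{x}_{0})^{-}\bm{p}=\alpha^{-1}(\bm{A}(\bm{q}^{-}\bm{B})^{-})^{-}\bm{p}=\alpha^{-1}\Delta$, so the product equals exactly $\Delta$ for every admissible $\alpha$. Regularity of $\bm{x}_{0}$ is inherited from that of $(\bm{q}^{-}\bm{B})^{-}$, which makes the substitution legitimate.

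The main obstacle, as I foresee it, is not any deep computation but rather the careful bookkeeping of the regularity conditions needed to apply the preliminary results: one must verify that $\bm{q}^{-}\bm{B}$ has no zero entry (so that $(\bm{q}^{-}\bm{B})^{-}$ is regular and Lemma~\ref{L-Ax-d} applies), that $\bm{A}\bm{y}$ and $\bm{A}(\bm{q}^{-}\bm{B})^{-}$ are regular (so that the conjugate transposes and the antitonicity step are valid), and that $\beta=\bm{q}^{-}\bm{B}\bm{x}$ is a nonzero scalar (so that the rescaling by $\beta^{-1}$ makes sense) — each reducing directly to the stated hypotheses on $\bm{A}$, $\bm{B}$, $\bm{p}$, and $\bm{q}$. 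A minor subtlety is that the reduction uses the inequality $\bm{q}^{-}\bm{B}\bm{y}\leq\mathbb{1}$ in place of the equality $\bm{q}^{-}\bm{B}\bm{y}=\mathbb{1}$; this is harmless, since the resulting bound on $\bm{y}$ is only used to produce an inequality on $(\bm{A}\bm{y})^{-}\bm{p}$.
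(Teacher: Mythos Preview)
Your proposal is correct and matches the methodology the paper attributes to this result: the paper does not prove Theorem~\ref{T-qBxAxp} in the text but cites \cite{Krivulin2013Explicit} and describes the technique as deriving a strict lower bound for the objective together with a particular solution attaining it, which is precisely the lower-bound-plus-attainment argument you carry out. Your bookkeeping of the regularity hypotheses (column-regularity of $\bm{q}^{-}\bm{B}$, regularity of $\bm{A}\bm{y}$ and $\bm{A}(\bm{q}^{-}\bm{B})^{-}$, invertibility of $\beta$) is exactly what is needed to make each step legitimate.
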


Given matrices $\bm{A},\bm{B}\in\mathbb{X}^{n\times n}$ and a vector $\bm{g}\in\mathbb{X}^{n}$, consider the problem to find regular vectors $\bm{x}\in\mathbb{X}^{n}$ that
\begin{equation}
\begin{aligned}
&
\text{minimize}
&&
\bm{x}^{-}\bm{A}\bm{x},
\\
&
\text{subject to}
&&
\bm{B}\bm{x}\oplus\bm{g}
\leq
\bm{x}.
\end{aligned}
\label{P-xAx-Bxgx}
\end{equation}

A direct solution to the problem is given in \cite{Krivulin2014Aconstrained,Krivulin2015Amultidimensional} by introducing an auxiliary parameter to represent the minimum value of the objective function, and reducing the problem to a parameterized vector inequality. The existence condition for solutions of the inequality is used to evaluate the parameter, whereas the solutions of the inequality are taken as the complete solution to the optimization problem. 
\begin{theorem}
\label{T-xAx-Bxgx}
Let $\bm{A}$ be a matrix with spectral radius $\lambda>\mathbb{0}$, and $\bm{B}$ be a matrix such that $\mathop\mathrm{Tr}(\bm{B})\leq\mathbb{1}$. Then, the minimum value in problem \eqref{P-xAx-Bxgx} is equal to
\begin{equation*}
\theta
=
\lambda
\oplus
\bigoplus_{k=1}^{n-1}\mathop{\bigoplus\hspace{1.0em}}_{1\leq i_{1}+\cdots+i_{k}\leq n-k}\mathop\mathrm{tr}\nolimits^{1/k}(\bm{A}\bm{B}^{i_{1}}\cdots\bm{A}\bm{B}^{i_{k}}),
\end{equation*}
and all regular solutions are given by
\begin{equation*}
\bm{x}
=
(\theta^{-1}\bm{A}\oplus\bm{B})^{\ast}\bm{u},
\qquad
\bm{u}
\geq
\bm{g}.
\end{equation*}
\end{theorem}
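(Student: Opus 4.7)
\textbf{The plan} is to introduce an auxiliary scalar parameter $\theta>\mathbb{0}$ denoting the value of the objective, and to reduce the constrained minimization to a single parameterized linear inequality whose solvability condition, extracted from Theorem~\ref{T-Ax-x}, then pins down both the optimal $\theta$ and the whole family of regular solutions.

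First, I would rewrite the scalar bound $\bm{x}^{-}\bm{A}\bm{x}\leq\theta$ as an equivalent vector inequality $\theta^{-1}\bm{A}\bm{x}\leq\bm{x}$ (for regular $\bm{x}$). The backward direction follows by multiplying on the left by $\bm{x}^{-}$ and using $\bm{x}^{-}\bm{x}=\mathbb{1}$; the forward direction relies on $\bm{x}\bm{x}^{-}\geq\bm{I}$ to deduce $\bm{A}\bm{x}\leq\bm{x}\bm{x}^{-}\bm{A}\bm{x}=(\bm{x}^{-}\bm{A}\bm{x})\bm{x}\leq\theta\bm{x}$. Combining this with $\bm{B}\bm{x}\leq\bm{x}$ from the constraint compresses the problem into the single system
\begin{equation*}
(\theta^{-1}\bm{A}\oplus\bm{B})\bm{x}\leq\bm{x},
\qquad
\bm{x}\geq\bm{g},
\end{equation*}
so the original problem is feasible with objective value $\theta$ precisely when this system admits a regular solution.

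Next, I would apply Theorem~\ref{T-Ax-x} to the homogeneous part. Regular solutions exist if and only if $\mathop\mathrm{Tr}(\theta^{-1}\bm{A}\oplus\bm{B})\leq\mathbb{1}$, in which case they form the family $\bm{x}=(\theta^{-1}\bm{A}\oplus\bm{B})^{\ast}\bm{u}$ for regular $\bm{u}$. The residual bound $\bm{x}\geq\bm{g}$ is secured by any $\bm{u}\geq\bm{g}$, because $(\theta^{-1}\bm{A}\oplus\bm{B})^{\ast}\geq\bm{I}$; conversely, any feasible $\bm{x}$ satisfies $(\theta^{-1}\bm{A}\oplus\bm{B})^{\ast}\bm{x}=\bm{x}$ (since every iterate of $\theta^{-1}\bm{A}\oplus\bm{B}$ applied to $\bm{x}$ is dominated by $\bm{x}$), so taking $\bm{u}=\bm{x}$ recovers it and makes the parameterization exhaustive.

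The main technical obstacle is the explicit evaluation of the smallest $\theta$ that verifies $\mathop\mathrm{Tr}(\theta^{-1}\bm{A}\oplus\bm{B})\leq\mathbb{1}$. I would expand $(\theta^{-1}\bm{A}\oplus\bm{B})^{m}$ as a tropical sum over all length-$m$ words in the two letters $\theta^{-1}\bm{A}$ and $\bm{B}$, and then use cyclic invariance of the trace to normalize each nontrivial word into the canonical shape $\theta^{-k}\bm{A}\bm{B}^{i_{1}}\cdots\bm{A}\bm{B}^{i_{k}}$ with $k\geq 1$ and $i_{1}+\cdots+i_{k}=m-k$. Words without any occurrence of $\bm{A}$ contribute only $\mathop\mathrm{Tr}(\bm{B})\leq\mathbb{1}$ and so can be discarded, while the remaining scalar requirements $\theta^{-k}\mathop\mathrm{tr}(\bm{A}\bm{B}^{i_{1}}\cdots\bm{A}\bm{B}^{i_{k}})\leq\mathbb{1}$ are equivalent to $\theta\geq\mathop\mathrm{tr}\nolimits^{1/k}(\bm{A}\bm{B}^{i_{1}}\cdots\bm{A}\bm{B}^{i_{k}})$ over all admissible tuples with $i_{1}+\cdots+i_{k}\leq n-k$. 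Taking the join over all such tuples and splitting off the all-zero-exponent subcase, which yields $\mathop\mathrm{tr}\nolimits^{1/k}(\bm{A}^{k})$ for $k=1,\ldots,n$ and assembles into the spectral radius $\lambda$, leaves exactly the terms with $1\leq i_{1}+\cdots+i_{k}\leq n-k$ and $k\leq n-1$, matching the claimed formula for $\theta$.
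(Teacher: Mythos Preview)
Your proposal is correct and follows precisely the approach the paper itself attributes to the cited works: introduce a scalar parameter for the objective value, convert $\bm{x}^{-}\bm{A}\bm{x}\leq\theta$ into $\theta^{-1}\bm{A}\bm{x}\leq\bm{x}$, merge it with the constraint into the single parameterized inequality $(\theta^{-1}\bm{A}\oplus\bm{B})\bm{x}\leq\bm{x}$, and invoke Theorem~\ref{T-Ax-x} so that the solvability condition $\mathop\mathrm{Tr}(\theta^{-1}\bm{A}\oplus\bm{B})\leq\mathbb{1}$ determines $\theta$ while the Kleene-star representation delivers all solutions. The paper does not spell out a proof in the text (it only sketches this method and refers to \cite{Krivulin2014Aconstrained,Krivulin2015Amultidimensional}), but your expansion of the trace condition via words in $\theta^{-1}\bm{A}$ and $\bm{B}$, the cyclic normalization, the separation of the pure-$\bm{A}$ terms into $\lambda$, and the handling of the bound $\bm{x}\geq\bm{g}$ through $(\theta^{-1}\bm{A}\oplus\bm{B})^{\ast}\geq\bm{I}$ are exactly the ingredients that argument requires.
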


Finally, suppose that, given a matrix $\bm{A}\in\mathbb{X}^{n\times n}$ and vectors $\bm{g},\bm{h}\in\mathbb{X}^{n}$, we need to find regular vectors $\bm{x}\in\mathbb{X}^{n}$ that solve the problem
\begin{equation}
\begin{aligned}
&
\text{minimize}
&&
\bm{x}^{-}\bm{A}\bm{x},
\\
&
\text{subject to}
&&
\bm{g}
\leq
\bm{x}
\leq
\bm{h}.
\end{aligned}
\label{P-xAx-gxh}
\end{equation}

The same technique as above yields the next solution proposed in \cite{Krivulin2014Aconstrained}.
\begin{theorem}
\label{T-xAx-gxh}
Let $\bm{A}$ be a matrix with spectral radius $\lambda>\mathbb{0}$, and $\bm{h}$ be a regular vector such that $\bm{h}^{-}\bm{g}\leq\mathbb{1}$. Then, the minimum value in problem \eqref{P-xAx-gxh} is equal to
\begin{equation*}
\theta
=
\lambda
\oplus
\bigoplus_{k=1}^{n-1}(\bm{h}^{-}\bm{A}^{k}\bm{g})^{1/k},
\end{equation*}
and all regular solutions are given by
\begin{equation*}
\bm{x}
=
(\theta^{-1}\bm{A})^{\ast}\bm{u},
\qquad
\bm{g}
\leq
\bm{u}
\leq
(\bm{h}^{-}(\theta^{-1}\bm{A})^{\ast})^{-}.
\end{equation*}
\end{theorem}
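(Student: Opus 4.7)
The plan is to reduce the optimization problem to the combined solvability of a parametrized linear inequality and two bound constraints, then extract the minimum parameter value from the feasibility condition. First, I would introduce a scalar parameter $\theta$ representing a tentative upper bound on the objective and show that the scalar inequality $\bm{x}^{-}\bm{A}\bm{x}\leq\theta$ is equivalent, for regular $\bm{x}$, to the vector inequality $\theta^{-1}\bm{A}\bm{x}\leq\bm{x}$. One direction uses $\bm{x}\bm{x}^{-}\geq\bm{I}$ to obtain $\bm{A}\bm{x}\leq\bm{x}\bm{x}^{-}\bm{A}\bm{x}\leq\theta\bm{x}$; the other direction follows by left-multiplying $\bm{A}\bm{x}\leq\theta\bm{x}$ by $\bm{x}^{-}$ and using $\bm{x}^{-}\bm{x}=\mathbb{1}$.

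Second, I would apply Theorem~\ref{T-Ax-x} to $\theta^{-1}\bm{A}\bm{x}\leq\bm{x}$. The regular solutions exist precisely when $\mathop\mathrm{Tr}(\theta^{-1}\bm{A})\leq\mathbb{1}$, i.e.\ when $\theta\geq\mathop\mathrm{tr}\nolimits^{1/k}(\bm{A}^{k})$ for $k=1,\dots,n$; since the spectral radius satisfies $\lambda=\bigoplus_{k}\mathop\mathrm{tr}\nolimits^{1/k}(\bm{A}^{k})$, this is equivalent to $\theta\geq\lambda$. Under this condition, the solutions take the form $\bm{x}=(\theta^{-1}\bm{A})^{\ast}\bm{u}$ with $\bm{u}$ any regular vector.

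Third, I would incorporate the box constraints. Since $(\theta^{-1}\bm{A})^{\ast}\geq\bm{I}$, the lower bound $\bm{g}\leq\bm{x}=(\theta^{-1}\bm{A})^{\ast}\bm{u}$ is ensured by $\bm{u}\geq\bm{g}$, and conversely every feasible $\bm{x}$ can be recovered by taking $\bm{u}=\bm{x}$ (using that $\theta^{-1}\bm{A}\bm{x}\leq\bm{x}$ yields $(\theta^{-1}\bm{A})^{\ast}\bm{x}=\bm{x}$). The upper bound $(\theta^{-1}\bm{A})^{\ast}\bm{u}\leq\bm{h}$ is translated by Lemma~\ref{L-Ax-d} into $\bm{u}\leq(\bm{h}^{-}(\theta^{-1}\bm{A})^{\ast})^{-}$. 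This yields the stated parametric description of regular solutions for every admissible $\theta$.

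The main obstacle, and the heart of the argument, is extracting the minimum admissible $\theta$ from the compatibility of these two-sided bounds on $\bm{u}$. The range for $\bm{u}$ is nonempty iff $\bm{g}\leq(\bm{h}^{-}(\theta^{-1}\bm{A})^{\ast})^{-}$, which, after conjugate transposition, becomes the scalar inequality $\bm{h}^{-}(\theta^{-1}\bm{A})^{\ast}\bm{g}\leq\mathbb{1}$. Expanding the Kleene star gives
\begin{equation*}
\bm{h}^{-}(\theta^{-1}\bm{A})^{\ast}\bm{g}
=
\bm{h}^{-}\bm{g}\oplus\bigoplus_{k=1}^{n-1}\theta^{-k}\bm{h}^{-}\bm{A}^{k}\bm{g}.
\end{equation*}
The $k=0$ term $\bm{h}^{-}\bm{g}\leq\mathbb{1}$ is the hypothesis; each remaining term $\theta^{-k}\bm{h}^{-}\bm{A}^{k}\bm{g}\leq\mathbb{1}$ is equivalent to $\theta\geq(\bm{h}^{-}\bm{A}^{k}\bm{g})^{1/k}$. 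Together with $\theta\geq\lambda$, this shows that the set of feasible $\theta$ has the stated infimum, and that this infimum is attained, giving the minimum value and the full solution set. I expect the bookkeeping of the $k=0$ term (which forces the hypothesis $\bm{h}^{-}\bm{g}\leq\mathbb{1}$ into the argument rather than into the optimum) to be the only subtle point, since the rest is a direct synthesis of Lemma~\ref{L-Ax-d} and Theorem~\ref{T-Ax-x}.
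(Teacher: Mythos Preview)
Your proposal is correct and follows precisely the technique the paper indicates for this result: the paper does not give a full proof of Theorem~\ref{T-xAx-gxh} but states that ``the same technique as above'' (namely, introducing an auxiliary parameter for the objective value, reducing to a parametrized inequality via Theorem~\ref{T-Ax-x}, and determining the minimum parameter from the existence condition) yields the solution, citing \cite{Krivulin2014Aconstrained}. Your four-step argument---the equivalence $\bm{x}^{-}\bm{A}\bm{x}\leq\theta\Leftrightarrow\theta^{-1}\bm{A}\bm{x}\leq\bm{x}$, the application of Theorem~\ref{T-Ax-x}, the translation of the box constraints via Lemma~\ref{L-Ax-d}, and the expansion of $\bm{h}^{-}(\theta^{-1}\bm{A})^{\ast}\bm{g}\leq\mathbb{1}$ to extract the minimum~$\theta$---is exactly this technique carried out in detail, including the correct handling of the $k=0$ term that isolates the hypothesis $\bm{h}^{-}\bm{g}\leq\mathbb{1}$.
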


\subsection{Computational complexity of solutions}

We conclude this section with some remarks on the computational complexity of the results with respect to the dimension $n$ of the problems. First, note that the computation of the solutions provided by Theorems~\ref{T-dAxAxd}, \ref{T-qBxAxp} and \ref{T-xAx-gxh} to problems \eqref{P-dAxAxd}, \eqref{P-qBxAxp} and \eqref{P-xAx-gxh} involves only a fixed number of basic matrix-vector operations, and thus obviously has a polynomial complexity. Specifically, the calculation of the star $\bm{A}^{\ast}$ for a matrix $\bm{A}$ of order $n$ requires computing of the sum of the $n-1$ first powers of $\bm{A}$, and therefore, takes no more than $O(n^{4})$ scalar operations. The computation of the spectral radius $\lambda$ for the matrix has the same level of complexity.

Consider the solution given by Theorem~\ref{T-xAx-Bxgx} to problem \eqref{P-xAx-Bxgx}, and verify that it can be calculated in polynomial time. We write the minimum in the problem as 
\begin{equation*}
\theta
=
\lambda
\oplus
\mu,
\qquad
\mu
=
\bigoplus_{k=1}^{n-1}\bigoplus_{l=1}^{n-k}\mathop\mathrm{tr}\nolimits^{1/k}(\bm{T}_{kl}),
\end{equation*}
where $\bm{T}_{kl}$ is the sum of matrix products $\bm{A}\bm{B}^{i_{1}}\cdots\bm{A}\bm{B}^{i_{k}}$ over all nonnegative integers $i_{1},\ldots,i_{k}$ such that $i_{1}+\cdots+i_{k}=l$.

The matrices $\bm{T}_{kl}$ satisfy the recurrence equation $\bm{T}_{kl}=\bm{T}_{k-1,l}\bm{A}\oplus\bm{T}_{k,l-1}\bm{B}$, where $\bm{T}_{k0}=\bm{A}^{k}$, $\bm{T}_{0l}=\bm{B}^{l}$ and $\bm{T}_{00}=\bm{I}$, which involves one matrix addition and two matrix multiplications per matrix. Since the number of matrices $\bm{T}_{kl}$ needed to evaluate the sum $\mu$ is $1+\cdots+n=n(n+1)/2$, the complexity of computing $\mu$, and hence of $\theta$, is at most $O(n^{5})$. Moreover, given the value of $\theta$, any solution vector $\bm{x}$ is computed in polynomial time as well, and thus we conclude that the overall solution has polynomial complexity.

\section{Application to project scheduling}
\label{S-APT}

We are now in a position to derive solutions to the scheduling problems formulated in the beginning of the paper. In this section, we first represent each problem in the framework of the idempotent semifield $\mathbb{R}_{\max,+}$ in both scalar and vector forms, and then solve it by reducing to an optimization problem of the previous section.

To provide insight into the solution method proposed, and to illustrate the computation technique used, we offer numerical examples of solving scheduling problems. To save room, we restrict the examples to artificial problems of low dimension, which, however, unambiguously demonstrate the practicability of the approach to solve real-world problems of high dimension.

\subsection{Minimization of maximum deviation from due dates}

Consider problem \eqref{P-yididiyi-aijxjyi-bijxjxi-cijyjxi} and describe it in terms of the semifield $\mathbb{R}_{\max,+}$. By replacing the usual operations by those of $\mathbb{R}_{\max,+}$, we obtain the problem to find the unknowns $x_{i}$ and $y_{i}$ for all $i=1,\ldots,n$ to
\begin{equation*}
\begin{aligned}
&
\text{minimize}
&&
\bigoplus_{i=1}^{n}(d_{i}^{-1}y_{i}\oplus y_{i}^{-1}d_{i}),
\\
&
\text{subject to}
&&
\bigoplus_{j=1}^{n}a_{ij}x_{j}
=
y_{i},
\quad
\bigoplus_{j=1}^{n}b_{ij}x_{j}
\leq
x_{i},
\quad
\bigoplus_{j=1}^{n}c_{ij}y_{j}
\leq
x_{i},
\\
&&&
i=1,\ldots,n.
\end{aligned}
%\label{P-xAxqAxxpqp-Bxx-gxh}
\end{equation*}

To put the problem in a vector form, we introduce the matrix-vector notation
\begin{equation*}
\bm{A}
=
(a_{ij}),
\quad
\bm{B}
=
(b_{ij}),
\quad
\bm{C}
=
(c_{ij}),
\quad
\bm{d}
=
(d_{i}),
\quad
\bm{x}
=
(x_{i}),
\quad
\bm{y}
=
(y_{i}).
\end{equation*}

With this notation, the problem is to find the unknown vectors $\bm{x}$ and $\bm{y}$ that
\begin{equation}
\begin{aligned}
&
\text{minimize}
&&
\bm{d}^{-}\bm{y}\oplus\bm{y}^{-}\bm{d},
\\
&
\text{subject to}
&&
\bm{A}\bm{x}
=
\bm{y},
\quad
\bm{B}\bm{x}
\leq
\bm{x},
\quad
\bm{C}\bm{y}
\leq
\bm{x}.
\end{aligned}
\label{P-dyyd-Axy-Bxx-Cyx}
\end{equation}

The next result offers a solution to the problem.
\begin{theorem}
\label{T-dyyd-Axy-Bxx-Cyx}
Let $\bm{A}$ be a row-regular matrix, the matrix $\bm{D}=\bm{B}\oplus\bm{C}\bm{A}$ satisfy the condition $\mathop\mathrm{Tr}(\bm{D})\leq\mathbb{1}$, and $\bm{d}$ be a regular vector. Then, the minimum value in problem \eqref{P-dyyd-Axy-Bxx-Cyx} is equal to
\begin{equation*}
\Delta
=
((\bm{A}\bm{D}^{\ast}(\bm{d}^{-}\bm{A}\bm{D}^{\ast})^{-})^{-}\bm{d})^{1/2},
\end{equation*}
and the maximum solution is given by
\begin{equation*}
\bm{x}
=
\Delta\bm{D}^{\ast}(\bm{d}^{-}\bm{A}\bm{D}^{\ast})^{-},
\qquad
\bm{y}
=
\Delta\bm{A}\bm{D}^{\ast}(\bm{d}^{-}\bm{A}\bm{D}^{\ast})^{-}.
\end{equation*}
\end{theorem}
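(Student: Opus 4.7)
The plan is to remove variables and constraints one by one until the problem reduces to a standard form covered by the earlier theorems. First I would use the equality constraint $\bm{A}\bm{x}=\bm{y}$ to substitute out $\bm{y}$ throughout: the finish-start constraint $\bm{C}\bm{y}\leq\bm{x}$ becomes $\bm{C}\bm{A}\bm{x}\leq\bm{x}$, which combines with $\bm{B}\bm{x}\leq\bm{x}$ into the single inequality $\bm{D}\bm{x}\leq\bm{x}$ with $\bm{D}=\bm{B}\oplus\bm{C}\bm{A}$, and the objective becomes $\bm{d}^{-}\bm{A}\bm{x}\oplus(\bm{A}\bm{x})^{-}\bm{d}$. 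Problem \eqref{P-dyyd-Axy-Bxx-Cyx} is thereby equivalent to minimizing this function over regular $\bm{x}$ satisfying $\bm{D}\bm{x}\leq\bm{x}$.

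Next I would invoke Theorem~\ref{T-Ax-x}: the hypothesis $\mathop\mathrm{Tr}(\bm{D})\leq\mathbb{1}$ yields that every regular solution of $\bm{D}\bm{x}\leq\bm{x}$ has the form $\bm{x}=\bm{D}^{\ast}\bm{u}$ for some regular vector $\bm{u}$, and conversely every such vector is feasible. Substituting this parameterization into the objective gives the unconstrained problem of minimizing $\bm{d}^{-}(\bm{A}\bm{D}^{\ast})\bm{u}\oplus((\bm{A}\bm{D}^{\ast})\bm{u})^{-}\bm{d}$ over regular $\bm{u}$, which is exactly problem \eqref{P-dAxAxd} with the matrix $\bm{A}$ replaced by $\bm{A}\bm{D}^{\ast}$.

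To apply Theorem~\ref{T-dAxAxd} I must check that $\bm{A}\bm{D}^{\ast}$ is row-regular; this follows because $\bm{D}^{\ast}\geq\bm{I}$ forces $\bm{A}\bm{D}^{\ast}\geq\bm{A}$ entrywise, and $\bm{A}$ is row-regular by assumption, while $\bm{d}$ remains regular. The theorem then delivers the minimum value
\begin{equation*}
\Delta
=
((\bm{A}\bm{D}^{\ast}(\bm{d}^{-}\bm{A}\bm{D}^{\ast})^{-})^{-}\bm{d})^{1/2}
\end{equation*}
and the maximum $\bm{u}$ attaining it, namely $\bm{u}=\Delta(\bm{d}^{-}\bm{A}\bm{D}^{\ast})^{-}$. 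Multiplying by $\bm{D}^{\ast}$ yields the stated formula for $\bm{x}$, and $\bm{y}=\bm{A}\bm{x}$ gives the formula for $\bm{y}$.

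The one delicate point — and the step I would take most care over — is the transfer of the \emph{maximality} statement from $\bm{u}$ to $\bm{x}$. The map $\bm{u}\mapsto\bm{D}^{\ast}\bm{u}$ is entrywise monotone but in general not injective, so I must argue that the maximum $\bm{u}$ among minimizers produces the maximum $\bm{x}$ among minimizers. This follows from monotonicity together with the observation that the objective depends on $\bm{u}$ only through $\bm{x}=\bm{D}^{\ast}\bm{u}$: if any feasible $\bm{x}'$ attaining $\Delta$ strictly exceeded $\bm{D}^{\ast}\bm{u}$, then taking $\bm{u}'=\bm{x}'$ (which is itself a valid choice since $\bm{D}^{\ast}\bm{x}'=\bm{x}'$ whenever $\bm{D}\bm{x}'\leq\bm{x}'$) would contradict the maximality of $\bm{u}$ from Theorem~\ref{T-dAxAxd}. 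This confirms that the displayed $\bm{x}$ and $\bm{y}$ are indeed the componentwise largest optimal solutions.
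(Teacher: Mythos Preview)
Your proposal is correct and follows essentially the same route as the paper's proof: substitute $\bm{y}=\bm{A}\bm{x}$, collapse the two inequality constraints into $\bm{D}\bm{x}\leq\bm{x}$, parameterize by $\bm{x}=\bm{D}^{\ast}\bm{u}$ via Theorem~\ref{T-Ax-x}, and then apply Theorem~\ref{T-dAxAxd} with $\bm{A}$ replaced by $\bm{A}\bm{D}^{\ast}$. Your additional verification that $\bm{A}\bm{D}^{\ast}$ is row-regular and your argument transferring maximality from $\bm{u}$ to $\bm{x}$ are more careful than the paper, which simply asserts the back-substitution without comment.
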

\begin{proof}
After substitution $\bm{y}=\bm{A}\bm{x}$, we combine both inequality constraints into one inequality of the form $\bm{D}\bm{x}\leq\bm{x}$ with $\bm{D}=\bm{B}\oplus\bm{C}\bm{A}$. An application of Theorem~\ref{T-Ax-x} to this inequality yields the solution $\bm{x}=\bm{D}^{\ast}\bm{u}$, where $\bm{u}$ is any regular vector.

Substitution of this solution reduces problem \eqref{P-dyyd-Axy-Bxx-Cyx} to the unconstrained problem
\begin{equation*}
\begin{aligned}
&
\text{minimize}
&&
\bm{d}^{-}\bm{A}\bm{D}^{\ast}\bm{u}\oplus(\bm{A}\bm{D}^{\ast}\bm{u})^{-}\bm{d}.
\end{aligned}
%\label{P-dABCAuABCAudxAxd}
\end{equation*}

The last problem has the form of \eqref{P-dAxAxd} with $\bm{A}$ replaced by $\bm{A}\bm{D}^{\ast}$. Therefore, we can apply Theorem~\ref{T-dAxAxd} to obtain a solution in terms of the unknown vector $\bm{u}$. Turning back to the vectors $\bm{x}$ and $\bm{y}$ leads to the desired result.
%\qed
\end{proof}

We now consider the conditions of the theorem to see that, in the context of project scheduling, these and similar conditions are naturally met in practice, unless the problem under consideration is incorrectly posed, due to wrong or incompatible conditions. Specifically, since the elements on the diagonal of the matrix $\bm{A}$ represent the minimum duration of activities, and thus must be greater than $\mathbb{0}=-\infty$, the matrix is formally both row- and column-regular. Moreover, for the same reason, the matrix $\bm{A}$ has the spectral radius $\lambda>\mathbb{0}$.

The regularity assumption on the vector $\bm{d}$ is equivalent to assuming that the deadlines for all activities are finite (not equal to $\mathbb{0}=-\infty$), which is the case for the real-world problems, and thus this assumption is typically fulfilled. The same conclusion can be reached for similar vectors encountered in the subsequent proofs.

Finally, the condition $\mathop\mathrm{Tr}(\bm{D})\leq\mathbb{1}$ implies that the constraints in the problem are compatible to provide nonempty feasible sets for the unknown vectors $\bm{x}$ and $\bm{y}$. 

We illustrate the direct solution offered by Theorem~\ref{T-dyyd-Axy-Bxx-Cyx} by the following example.

\begin{example}
\label{X-dyyd-Axy-Bxx-Cyx}
Consider a project that involves $n=3$ activities that operate under start-finish, start-start, finish-start and due dates constraints, given by the following matrices and the vector:
\begin{gather*}
\bm{A}
=
\left(
\begin{array}{crr}
4 & 0 & \mathbb{0}
\\
1 & 3 & -1
\\
0 & -2 & 2
\end{array}
\right),
\qquad
\bm{B}
=
\left(
\begin{array}{rrc}
\mathbb{0} & -2 & 1
\\
0 & \mathbb{0} & 2
\\
-1 & \mathbb{0} & \mathbb{0}
\end{array}
\right),
\\
\bm{C}
=
\left(
\begin{array}{ccr}
\mathbb{0} & \mathbb{0} & -1
\\
\mathbb{0} & \mathbb{0} & 1
\\
\mathbb{0} & \mathbb{0} & \mathbb{0}
\end{array}
\right),
\qquad
\bm{d}
=
\left(
\begin{array}{c}
5
\\
5
\\
5
\end{array}
\right),
\end{gather*}
where the symbol $\mathbb{0}=-\infty$ is used to simplify the writing of matrices.

To verify the conditions of Theorem~\ref{T-dyyd-Axy-Bxx-Cyx}, we first note that the matrix $\bm{A}$ is row-regular and the vector $\bm{d}$ is regular. Next, we calculate the matrices
\begin{equation*}
\bm{C}\bm{A}
=
\left(
\begin{array}{rrc}
-1 & -3 & 1
\\
1 & -1 & 3
\\
\mathbb{0} & \mathbb{0} & \mathbb{0}
\end{array}
\right),
\qquad
\bm{D}
=
\bm{B}\oplus\bm{C}\bm{A}
=
\left(
\begin{array}{rrc}
-1 & -2 & 1
\\
1 & -1 & 3
\\
-1 & \mathbb{0} & \mathbb{0}
\end{array}
\right),
\end{equation*}
and then take the matrix $\bm{D}$ to find the powers
\begin{equation*}
\bm{D}^{2}
=
\left(
\begin{array}{rrc}
0 & -3 & 1
\\
2 & -1 & 2
\\
-2 & -3 & 0
\end{array}
\right),
\qquad
\bm{D}^{3}
=
\left(
\begin{array}{rrc}
0 & -2 & 1
\\
1 & 0 & 3
\\
-1 & -4 & 0
\end{array}
\right).
\end{equation*}

After evaluating the traces, we have $\mathop\mathrm{Tr}(\bm{D})=\mathop\mathrm{tr}\bm{D}\oplus\mathop\mathrm{tr}(\bm{D}^{2})\oplus\mathop\mathrm{tr}(\bm{D}^{3})=0=\mathbb{1}$, and hence  conclude that the conditions of Theorem~\ref{T-dyyd-Axy-Bxx-Cyx} are fulfilled.

Furthermore, we successively obtain
\begin{gather*}
\bm{D}^{\ast}
=
\bm{I}\oplus\bm{D}\oplus\bm{D}^{2}
=
\left(
\begin{array}{rrc}
0 & -2 & 1
\\
2 & 0 & 3
\\
-1 & -3 & 0
\end{array}
\right),
\qquad
\bm{A}\bm{D}^{\ast}
=
\left(
\begin{array}{crc}
4 & 2 & 5
\\
5 & 3 & 6
\\
1 & -1 & 2
\end{array}
\right),
\\
(\bm{d}^{-}\bm{A}\bm{D}^{\ast})^{-}
=
\left(
\begin{array}{r}
0
\\
2
\\
-1
\end{array}
\right),
\qquad
\bm{D}^{\ast}(\bm{d}^{-}\bm{A}\bm{D}^{\ast})^{-}
=
\left(
\begin{array}{r}
0
\\
2
\\
-1
\end{array}
\right),
\\
\bm{A}\bm{D}^{\ast}(\bm{d}^{-}\bm{A}\bm{D}^{\ast})^{-}
=
\left(
\begin{array}{c}
4
\\
5
\\
1
\end{array}
\right).
\end{gather*}

The minimum value of the objective function, which shows the minimal violation of the due dates, is given by $\Delta=((\bm{A}\bm{D}^{\ast}(\bm{d}^{-}\bm{A}\bm{D}^{\ast})^{-})^{-}\bm{d})^{1/2}=2$.

The optimal schedule has the latest start and finish times defined by the vectors
\begin{equation*}
\bm{x}
=
\Delta\bm{D}^{\ast}(\bm{d}^{-}\bm{A}\bm{D}^{\ast})^{-}
=
\left(
\begin{array}{c}
2
\\
4
\\
1
\end{array}
\right),
\qquad
\bm{y}
=
\Delta\bm{A}\bm{D}^{\ast}(\bm{d}^{-}\bm{A}\bm{D}^{\ast})^{-}
=
\left(
\begin{array}{c}
6
\\
7
\\
3
\end{array}
\right).
\end{equation*}
\end{example}

\subsection{Minimization of maximum deviation of finish times}

After rewriting problem \eqref{P-yiyi-aijxjyi-bijxjxj-cijyjxi-yifi} in terms of the semifield $\mathbb{R}_{\max,+}$, the problem becomes
\begin{equation*}
\begin{aligned}
&
\text{minimize}
&&
\bigoplus_{i=1}^{n}y_{i}
\bigoplus_{j=1}^{n}y_{j}^{-1},
\\
&
\text{subject to}
&&
\bigoplus_{j=1}^{n}a_{ij}x_{j}
=
y_{i},
\quad
\bigoplus_{j=1}^{n}b_{ij}x_{j}
\leq
x_{i},
\quad
\bigoplus_{j=1}^{n}c_{ij}y_{j}
\leq
x_{i},
\\
&&&
y_{i}
\leq
f_{i},
\qquad
i=1,\ldots,n.
\end{aligned}
\end{equation*}

In addition to the previously introduced matrix-vector notation, we define the vector $\bm{f}=(f_{i})$ to write the problem 
\begin{equation}
\begin{aligned}
&
\text{minimize}
&&
\bm{1}^{T}\bm{y}\bm{y}^{-}\bm{1},
\\
&
\text{subject to}
&&
\bm{A}\bm{x}
=
\bm{y},
\quad
\bm{B}\bm{x}
\leq
\bm{x},
\quad
\bm{C}\bm{y}
\leq
\bm{x},
\quad
\bm{y}
\leq
\bm{f}.
\end{aligned}
\label{P-1yy1-Axy-Bxx-Cyx-yf}
\end{equation}

The following result provides a solution to the problem.

\begin{theorem}
\label{T-1yy1-Axy-Bxx-Cyx-yf}
Let $\bm{A}$ be a regular matrix, the matrix $\bm{D}=\bm{B}\oplus\bm{C}\bm{A}$ satisfy the condition $\mathop\mathrm{Tr}(\bm{D})\leq\mathbb{1}$, and $\bm{f}$ be a regular vector. Then, the minimum value in problem \eqref{P-1yy1-Axy-Bxx-Cyx-yf} is equal to
\begin{equation*}
\Delta
=
(\bm{A}\bm{D}^{\ast}(\bm{1}^{T}\bm{A}\bm{D}^{\ast})^{-})^{-}\bm{1},
\end{equation*}
and attained if
\begin{equation*}
\bm{x}
=
\alpha\bm{D}^{\ast}(\bm{1}^{T}\bm{A}\bm{D}^{\ast})^{-},
\quad
\bm{y}
=
\alpha\bm{A}\bm{D}^{\ast}(\bm{1}^{T}\bm{A}\bm{D}^{\ast})^{-},
\quad
\alpha
\leq
(\bm{f}^{-}\bm{A}\bm{D}^{\ast}(\bm{1}^{T}\bm{A}\bm{D}^{\ast})^{-})^{-1}.
\end{equation*}
\end{theorem}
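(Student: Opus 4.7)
The plan is to follow the same two-stage reduction used in the proof of Theorem~\ref{T-dyyd-Axy-Bxx-Cyx}: first eliminate $\bm{y}$ through the equation $\bm{y}=\bm{A}\bm{x}$ and collapse the two inequality constraints into a single homogeneous inequality $\bm{D}\bm{x}\leq\bm{x}$ with $\bm{D}=\bm{B}\oplus\bm{C}\bm{A}$; then parameterize the admissible $\bm{x}$ by Theorem~\ref{T-Ax-x} as $\bm{x}=\bm{D}^{\ast}\bm{u}$ with $\bm{u}$ regular, which is legitimate because the hypothesis $\mathop\mathrm{Tr}(\bm{D})\leq\mathbb{1}$ is assumed. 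The only constraint left unused after this reduction is the deadline bound $\bm{y}\leq\bm{f}$, which will be absorbed at the very end as a cap on a scalar parameter.

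Substituting $\bm{x}=\bm{D}^{\ast}\bm{u}$ and $\bm{y}=\bm{A}\bm{D}^{\ast}\bm{u}$ rewrites the objective as $\bm{1}^{T}(\bm{A}\bm{D}^{\ast})\bm{u}\,((\bm{A}\bm{D}^{\ast})\bm{u})^{-}\bm{1}$, which is exactly the template of problem~\eqref{P-qBxAxp} with both of its matrices equal to $\bm{A}\bm{D}^{\ast}$ and with $\bm{p}=\bm{q}=\bm{1}$. Regularity of $\bm{A}$, together with $\bm{D}^{\ast}\geq\bm{I}$, transfers to $\bm{A}\bm{D}^{\ast}$ and guarantees it is both row- and column-regular, so the hypotheses of Theorem~\ref{T-qBxAxp} apply. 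That theorem then yields the minimum value $\Delta=(\bm{A}\bm{D}^{\ast}(\bm{1}^{T}\bm{A}\bm{D}^{\ast})^{-})^{-}\bm{1}$, attained on the whole ray $\bm{u}=\alpha(\bm{1}^{T}\bm{A}\bm{D}^{\ast})^{-}$ for any $\alpha>\mathbb{0}$. Back-substituting produces the vectors $\bm{x}$ and $\bm{y}$ displayed in the statement.

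To finish, I would invoke the still-unused constraint $\bm{y}\leq\bm{f}$. On the one-parameter family of minimizers this is the inequality $\alpha\bm{v}\leq\bm{f}$ for the fixed regular vector $\bm{v}=\bm{A}\bm{D}^{\ast}(\bm{1}^{T}\bm{A}\bm{D}^{\ast})^{-}$; Lemma~\ref{L-Ax-d}, applied with $\bm{v}$ viewed as an $n\times 1$ matrix, $\bm{f}$ as the right-hand side, and $\alpha$ as a one-component unknown, returns $\alpha\leq(\bm{f}^{-}\bm{v})^{-1}$, which is precisely the bound stated. The main obstacle I anticipate is not algebraic but conceptual, namely verifying that the deadline band does not push the minimum up: since every $\alpha>\mathbb{0}$ on the ray attains the same value $\Delta$, it suffices that the upper bound $(\bm{f}^{-}\bm{v})^{-1}$ is itself positive, which follows from regularity of $\bm{f}$ and $\bm{v}$, so the feasible sub-ray is nonempty and the unconstrained optimum is preserved under the additional deadline condition.
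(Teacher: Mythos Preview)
Your proposal is correct and follows essentially the same route as the paper: substitute $\bm{y}=\bm{A}\bm{x}$, collapse the precedence inequalities into $\bm{D}\bm{x}\leq\bm{x}$, parameterize via Theorem~\ref{T-Ax-x} as $\bm{x}=\bm{D}^{\ast}\bm{u}$, apply Theorem~\ref{T-qBxAxp} with both matrices equal to $\bm{A}\bm{D}^{\ast}$ and $\bm{p}=\bm{q}=\bm{1}$ to get $\Delta$ and the optimizing ray, and finally use Lemma~\ref{L-Ax-d} to cap $\alpha$ by the deadline constraint. The only cosmetic difference is that the paper first converts $\bm{y}\leq\bm{f}$ into an upper bound on $\bm{u}$ before intersecting it with the ray, whereas you impose it directly on $\bm{y}=\alpha\bm{v}$; both routes yield the identical scalar bound $\alpha\leq(\bm{f}^{-}\bm{A}\bm{D}^{\ast}(\bm{1}^{T}\bm{A}\bm{D}^{\ast})^{-})^{-1}$.
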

\begin{proof}
As before, we substitute $\bm{y}=\bm{A}\bm{x}$ and combine the first two inequalities into one inequality $\bm{D}\bm{x}\leq\bm{x}$, where $\bm{D}=\bm{B}\oplus\bm{C}\bm{A}$. This inequality is then solved by using Theorem~\ref{T-Ax-x} to give the result $\bm{x}=\bm{D}^{\ast}\bm{u}$, where $\bm{u}$ is a regular vector. 

Furthermore, we write the last inequality constraint as $\bm{A}\bm{D}^{\ast}\bm{u}\leq\bm{f}$, and apply Lemma~\ref{L-Ax-d} to find the solution $\bm{u}\leq(\bm{f}^{-}\bm{A}\bm{D}^{\ast})^{-}$. The problem takes the form
\begin{equation*}
\begin{aligned}
&
\text{minimize}
&&
\bm{1}^{T}\bm{A}\bm{D}^{\ast}\bm{u}(\bm{A}\bm{D}^{\ast}\bm{u})^{-}\bm{1},
\\
&
\text{subject to}
&&
\bm{u}
\leq
(\bm{f}^{-}\bm{A}\bm{D}^{\ast})^{-}.
\end{aligned}
%\label{P-xAxqAxxpqp-Bxx-gxh}
\end{equation*}

First, we remove the constraints and solve the obtained unconstrained problem. By applying Theorem~\ref{T-qBxAxp}, where both matrices $\bm{A}$ and $\bm{B}$ are replaced by $\bm{A}\bm{D}^{\ast}$, and both vectors $\bm{p}$ and $\bm{q}$ by $\bm{1}$, we find the minimum $\Delta=(\bm{A}\bm{D}^{\ast}(\bm{1}^{T}\bm{A}\bm{D}^{\ast})^{-})^{-}\bm{1}$, which is attained at the vector $\bm{u}=\alpha(\bm{1}^{T}\bm{A}\bm{D}^{\ast})^{-}$, where $\alpha>\mathbb{0}$.

To find the values of the parameter $\alpha$, which meet the condition $\bm{u}\leq(\bm{f}^{-}\bm{A}\bm{D}^{\ast})^{-}$, we solve the inequality $\alpha(\bm{1}^{T}\bm{A}\bm{D}^{\ast})^{-}
\leq(\bm{f}^{-}\bm{A}\bm{D}^{\ast})^{-}$. By applying Lemma~\ref{L-Ax-d} with $\alpha$ as the unknown, we have $\alpha\leq(\bm{f}^{-}\bm{A}\bm{D}^{\ast}(\bm{1}^{T}\bm{A}\bm{D}^{\ast})^{-})^{-1}$.

It remains to turn back to the vectors $\bm{x}$ and $\bm{y}$, and thus complete the proof.
%\qed
\end{proof}

\begin{example}
Suppose that we need to minimize the maximum deviation of finish times in the project from Example~\ref{X-dyyd-Axy-Bxx-Cyx}, where, instead of the due dates, deadline constraints apply, given by the vector
\begin{equation*}
\bm{f}
=
\left(
\begin{array}{c}
6
\\
6
\\
6
\end{array}
\right).
\end{equation*}

We take advantage of intermediate results of the previous example to obtain
\begin{gather*}
(\bm{1}^{T}\bm{A}\bm{D}^{\ast})^{-}
=
\left(
\begin{array}{r}
-5
\\
-3
\\
-6
\end{array}
\right),
\qquad
\bm{D}^{\ast}(\bm{1}^{T}\bm{A}\bm{D}^{\ast})^{-}
=
\left(
\begin{array}{r}
-5
\\
-3
\\
-6
\end{array}
\right),
\\
\bm{A}\bm{D}^{\ast}(\bm{1}^{T}\bm{A}\bm{D}^{\ast})^{-}
=
\left(
\begin{array}{r}
-1
\\
0
\\
-4
\end{array}
\right).
\end{gather*}

According to Theorem~\ref{T-1yy1-Axy-Bxx-Cyx-yf}, the minimum deviation of finish times, which can be achieved in the project, is equal to $\Delta=(\bm{A}\bm{D}^{\ast}(\bm{1}^{T}\bm{A}\bm{D}^{\ast})^{-})^{-}\bm{1}=4$.

The optimal schedule is provided by the vectors
\begin{equation*}
\bm{x}
=
\alpha\bm{D}^{\ast}(\bm{1}^{T}\bm{A}\bm{D}^{\ast})^{-}
=
\alpha
\left(
\begin{array}{r}
-5
\\
-3
\\
-6
\end{array}
\right),
\qquad
\bm{y}
=
\alpha\bm{A}\bm{D}^{\ast}(\bm{1}^{T}\bm{A}\bm{D}^{\ast})^{-}
=
\alpha
\left(
\begin{array}{r}
-1
\\
0
\\
-4
\end{array}
\right),
\end{equation*}
where the condition $\alpha\leq(\bm{f}^{-}\bm{A}\bm{D}^{\ast}(\bm{1}^{T}\bm{A}\bm{D}^{\ast})^{-})^{-1}=6$ must be satisfied.

In terms of standard operations, the elements of the vectors $\bm{x}=(x_{1},x_{2},x_{3})^{T}$ and $\bm{y}=(y_{1},y_{2},y_{3})^{T}$ are written as
\begin{alignat*}{4}
x_{1}
&=
\alpha-5,
&\qquad
x_{2}
&=
\alpha-3,
&\qquad
x_{3}
&=
\alpha-6,
\\
y_{1}
&=
\alpha-1,
&\qquad
y_{2}
&=
\alpha,
&\qquad
y_{3}
&=
\alpha-4,
&\qquad
\alpha
&\leq
6.
\end{alignat*} 
\end{example}

\subsection{Minimization of maximum flow-time}

Consider problem \eqref{P-yixi-aijxjyi-bijxjxi-cijyjxi-gixi}, and rewrite it in terms of the semifield $\mathbb{R}_{\max,+}$. As a result, we obtain the problem
\begin{equation*}
\begin{aligned}
&
\text{minimize}
&&
\bigoplus_{i=1}^{n}x_{i}^{-1}y_{i},
\\
&
\text{subject to}
&&
\bigoplus_{j=1}^{n}a_{ij}x_{j}
=
y_{i},
\quad
\bigoplus_{j=1}^{n}b_{ij}x_{j}
\leq
x_{i},
\quad
\bigoplus_{j=1}^{n}c_{ij}y_{j}
\leq
x_{i},
\\
&&&
g_{i}
\leq
x_{i},
\qquad
i=1,\ldots,n.
\end{aligned}
%\label{P-xAxqAxxpqp-Bxx-gxh}
\end{equation*}

Furthermore, we add the vector $\bm{g}=(g_{i})$. Switching to matrix-vector notation puts the problem in the form
\begin{equation}
\begin{aligned}
&
\text{minimize}
&&
\bm{x}^{-}\bm{y},
\\
&
\text{subject to}
&&
\bm{A}\bm{x}
=
\bm{y},
\quad
\bm{B}\bm{x}
\leq
\bm{x},
\quad
\bm{C}\bm{y}
\leq
\bm{x},
\quad
\bm{g}
\leq
\bm{x}.
\end{aligned}
\label{P-xy-Axy-Bxx-Cyx-gx}
\end{equation}

A complete solution of the problem is given as follows.
\begin{theorem}
\label{T-xAx-BCAxgx}
Let $\bm{A}$ be a matrix with spectral radius $\lambda>\mathbb{0}$, and the matrix $\bm{D}=\bm{B}\oplus\bm{C}\bm{A}$ satisfy the condition $\mathop\mathrm{Tr}(\bm{D})\leq\mathbb{1}$. Then, the minimum value in problem \eqref{P-xy-Axy-Bxx-Cyx-gx} is equal to
\begin{equation*}
\theta
=
\lambda
\oplus
\bigoplus_{k=1}^{n-1}\mathop{\bigoplus\hspace{1.0em}}_{1\leq i_{1}+\cdots+i_{k}\leq n-k}\mathop\mathrm{tr}\nolimits^{1/k}(\bm{A}\bm{D}^{i_{1}}\cdots\bm{A}\bm{D}^{i_{k}}),
\end{equation*}
and all regular solutions are given by
\begin{equation*}
\bm{x}
=
(\theta^{-1}\bm{A}\oplus\bm{D})^{\ast}\bm{u},
\qquad
\bm{y}
=
\bm{A}(\theta^{-1}\bm{A}\oplus\bm{D})^{\ast}\bm{u},
\qquad
\bm{u}
\geq
\bm{g}.
\end{equation*}
\end{theorem}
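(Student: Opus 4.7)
The plan is to reduce problem \eqref{P-xy-Axy-Bxx-Cyx-gx} to the already-solved problem \eqref{P-xAx-Bxgx}, and then invoke Theorem~\ref{T-xAx-Bxgx} directly. First I would use the equality constraint $\bm{A}\bm{x}=\bm{y}$ to eliminate $\bm{y}$ from the problem entirely. Substituting into the objective turns $\bm{x}^{-}\bm{y}$ into $\bm{x}^{-}\bm{A}\bm{x}$, and substituting into the finish-start constraint $\bm{C}\bm{y}\leq\bm{x}$ produces $\bm{C}\bm{A}\bm{x}\leq\bm{x}$. Combining this with the start-start constraint $\bm{B}\bm{x}\leq\bm{x}$ gives a single inequality $(\bm{B}\oplus\bm{C}\bm{A})\bm{x}\leq\bm{x}$, that is, $\bm{D}\bm{x}\leq\bm{x}$ with $\bm{D}=\bm{B}\oplus\bm{C}\bm{A}$.

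The release time constraint $\bm{g}\leq\bm{x}$ can be merged with $\bm{D}\bm{x}\leq\bm{x}$ into the single two-sided condition $\bm{D}\bm{x}\oplus\bm{g}\leq\bm{x}$, since the conjunction of two entrywise inequalities with a common right-hand side is equivalent to the single inequality whose left-hand side is the componentwise maximum. At this point the reduced problem reads: minimize $\bm{x}^{-}\bm{A}\bm{x}$ subject to $\bm{D}\bm{x}\oplus\bm{g}\leq\bm{x}$, which is exactly problem \eqref{P-xAx-Bxgx} with $\bm{B}$ replaced by $\bm{D}$.

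Next I would check the hypotheses of Theorem~\ref{T-xAx-Bxgx}: the assumption that $\bm{A}$ has spectral radius $\lambda>\mathbb{0}$ is carried over directly, and the assumption $\mathop\mathrm{Tr}(\bm{D})\leq\mathbb{1}$ is part of the present hypothesis. Applying Theorem~\ref{T-xAx-Bxgx} then yields the minimum value
\begin{equation*}
\theta
=
\lambda
\oplus
\bigoplus_{k=1}^{n-1}\mathop{\bigoplus\hspace{1.0em}}_{1\leq i_{1}+\cdots+i_{k}\leq n-k}\mathop\mathrm{tr}\nolimits^{1/k}(\bm{A}\bm{D}^{i_{1}}\cdots\bm{A}\bm{D}^{i_{k}}),
\end{equation*}
together with the family of regular solutions $\bm{x}=(\theta^{-1}\bm{A}\oplus\bm{D})^{\ast}\bm{u}$ parametrized by any $\bm{u}\geq\bm{g}$. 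Finally, restoring $\bm{y}$ through $\bm{y}=\bm{A}\bm{x}$ gives $\bm{y}=\bm{A}(\theta^{-1}\bm{A}\oplus\bm{D})^{\ast}\bm{u}$, which is the claimed form.

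I do not expect any serious obstacle here, since the derivation is a straightforward assembly of moves already used in the proofs of Theorems~\ref{T-dyyd-Axy-Bxx-Cyx} and \ref{T-1yy1-Axy-Bxx-Cyx-yf}: eliminate $\bm{y}$ via the equality constraint, fold the two precedence inequalities into $\bm{D}\bm{x}\leq\bm{x}$, and invoke a ready-made optimization result. The only subtlety worth mentioning is to argue cleanly that the two reformulations are equivalent: that $\bm{D}\bm{x}\leq\bm{x}$ is logically equivalent to the conjunction of $\bm{B}\bm{x}\leq\bm{x}$ and $\bm{C}\bm{A}\bm{x}\leq\bm{x}$, which uses the semifield identity noted earlier that $u\oplus v\leq w$ iff $u\leq w$ and $v\leq w$, applied entrywise.
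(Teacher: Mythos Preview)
Your proposal is correct and follows essentially the same route as the paper: eliminate $\bm{y}$ via $\bm{y}=\bm{A}\bm{x}$, fold the constraints into $(\bm{B}\oplus\bm{C}\bm{A})\bm{x}\oplus\bm{g}\leq\bm{x}$, and apply Theorem~\ref{T-xAx-Bxgx} with $\bm{B}$ replaced by $\bm{D}$. Your write-up is in fact more detailed than the paper's, which compresses the substitution and constraint-merging into two sentences.
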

\begin{proof}
By substitution of the equality constraint $\bm{y}=\bm{A}\bm{x}$, we eliminate the vector $\bm{y}$. Then, we combine all inequality constraints into one to write the problem
\begin{equation*}
\begin{aligned}
&
\text{minimize}
&&
\bm{x}^{-}\bm{A}\bm{x},
\\
&
\text{subject to}
&&
(\bm{B}\oplus\bm{C}\bm{A})\bm{x}\oplus\bm{g}
\leq
\bm{x}.
\end{aligned}
%\label{P-xAx-BCAxgx}
\end{equation*}

This problem has the form of that at \eqref{P-xAx-Bxgx}, where $\bm{B}$ is replaced by $\bm{D}=\bm{B}\oplus\bm{C}\bm{A}$. Thus, a direct application of Theorem~\ref{T-xAx-Bxgx} yields the required solution.
%\qed
\end{proof}

As an illustration of the solution obtained, we present the next example.

\begin{example}
Consider the problem of minimizing the maximum flow-time in the project, which has the start-finish, start-start and finish-start constraints defined as in Example~\ref{X-dyyd-Axy-Bxx-Cyx}, and release time constraints given by
\begin{equation*}
\bm{g}
=
\left(
\begin{array}{c}
2
\\
2
\\
1
\end{array}
\right).
\end{equation*}

First, we verify the conditions of Theorem~\ref{T-xAx-BCAxgx}. We calculate the matrices
\begin{equation*}
\bm{A}^{2}
=
\left(
\begin{array}{ccr}
8 & 4 & -1
\\
5 & 6 & 2
\\
4 & 1 & 4
\end{array}
\right),
\qquad
\bm{A}^{3}
=
\left(
\begin{array}{ccc}
12 & 8 & 3
\\
9 & 9 & 5
\\
8 & 4 & 6
\end{array}
\right),
\end{equation*}
and then evaluate the traces to find $\lambda=\mathop\mathrm{tr}\bm{A}\oplus\mathop\mathrm{tr}\nolimits^{1/2}(\bm{A}^{2})\oplus\mathop\mathrm{tr}\nolimits^{1/3}(\bm{A}^{3})=
4$. Since $\lambda>0=\mathbb{1}$ and $\mathop\mathrm{Tr}(\bm{D})=0=\mathbb{1}$, the conditions of the theorem are satisfied.

To evaluate the minimum $\theta$ of the objective function, we calculate the matrices
\begin{gather*}
\bm{A}\bm{D}
=
\left(
\begin{array}{crc}
3 & 2 & 5
\\
4 & 2 & 6
\\
1 & -2 & 1
\end{array}
\right),
\qquad
\bm{A}\bm{D}^{2}
=
\left(
\begin{array}{crc}
4 & 1 & 5
\\
5 & 2 & 5
\\
0 & -1 & 2
\end{array}
\right),
\\
\bm{A}\bm{D}\bm{A}
=
\left(
\begin{array}{ccc}
7 & 5 & 7
\\
8 & 5 & 8
\\
5 & 1 & 3
\end{array}
\right),
\qquad
\bm{A}^{2}\bm{D}
=
\left(
\begin{array}{ccc}
7 & 6 & 9
\\
7 & 5 & 9
\\
3 & 2 & 5
\end{array}
\right).
\end{gather*}

After taking the traces and considering $\lambda$, we have the result of minimizing the maximum flow-time, given by $\theta=\lambda\oplus\mathop\mathrm{tr}(\bm{A}\bm{D}\oplus\bm{A}\bm{D}^{2})\oplus\mathop\mathrm{tr}\nolimits^{1/2}(\bm{A}\bm{D}\bm{A}\oplus\bm{A}^{2}\bm{D})=4$.

To describe the solution offered by Theorem~\ref{T-xAx-BCAxgx}, we need the matrices
\begin{equation*}
\theta^{-1}\bm{A}
\oplus
\bm{D}
=
\left(
\begin{array}{rrr}
0 & -2 & 1
\\
1 & -1 & 3
\\
-1 & -6 & -2
\end{array}
\right),
\qquad
(\theta^{-1}\bm{A}
\oplus
\bm{D})^{2}
=
\left(
\begin{array}{rrc}
0 & -2 & 1
\\
2 & -1 & 2
\\
-1 & -3 & 0
\end{array}
\right).
\end{equation*}

By combining these matrices with the identity matrix, we obtain
\begin{equation*}
(\theta^{-1}\bm{A}
\oplus
\bm{D})^{\ast}
=
\left(
\begin{array}{rrc}
0 & -2 & 1
\\
2 & 0 & 3
\\
-1 & -3 & 0
\end{array}
\right).
\end{equation*}

Note that the last matrix can be represented in the form
\begin{equation*}
(\theta^{-1}\bm{A}
\oplus
\bm{D})^{\ast}
=
\left(
\begin{array}{c}
1
\\
3
\\
0
\end{array}
\right)
\left(
\begin{array}{rrc}
-1
&
-3
&
0
\end{array}
\right).
\end{equation*}

The vector of optimal start times provided by Theorem~\ref{T-xAx-BCAxgx} is given by
\begin{equation*}
\bm{x}
=
(\theta^{-1}\bm{A}\oplus\bm{D})^{\ast}\bm{u}
=
\left(
\begin{array}{c}
1
\\
3
\\
0
\end{array}
\right)
\left(
\begin{array}{rrc}
-1
&
-3
&
0
\end{array}
\right)
\bm{u},
\qquad
\bm{u}
\geq
\bm{g}.
\end{equation*}

To simplify the solution, we introduce the new variable $
v
=
\left(
\begin{array}{rrc}
-1
&
-3
&
0
\end{array}
\right)
\bm{u}$,
which has to satisfy the condition
$
v
=
\left(
\begin{array}{rrc}
-1
&
-3
&
0
\end{array}
\right)
\bm{u}
\geq
\left(
\begin{array}{rrc}
-1
&
-3
&
0
\end{array}
\right)
\bm{g}
=
1$.

The vectors of optimal start and finish times now become
\begin{equation*}
\bm{x}
=
\left(
\begin{array}{c}
1
\\
3
\\
0
\end{array}
\right)
v,
\qquad
\bm{y}
=
\bm{A}
\bm{x}
=
\left(
\begin{array}{c}
5
\\
6
\\
2
\end{array}
\right)
v,
\qquad
v
\geq
1.
\end{equation*}

Using standard operations yields the elements of the vectors given by
\begin{alignat*}{4}
x_{1}
&=
v+1,
&\qquad
x_{2}
&=
v+3,
&\qquad
x_{3}
&=
v,
\\
y_{1}
&=
v+5,
&\qquad
y_{2}
&=
v+6,
&\qquad
y_{3}
&=
v+2,
&\qquad
v
&\geq
1.
\end{alignat*} 
\end{example}

\subsection{Minimization of makespan}

In the framework of the semifield $\mathbb{R}_{\max,+}$, problem \eqref{P-yixi-aijxjyi-gixihi-yifi} of minimizing the makespan is rewritten as
\begin{equation*}
\begin{aligned}
&
\text{minimize}
&&
\bigoplus_{i=1}^{n}y_{i}
\bigoplus_{j=1}^{n}x_{j}^{-1},
\\
&
\text{subject to}
&&
\bigoplus_{j=1}^{n}a_{ij}x_{j}
=
y_{i},
\quad
g_{i}
\leq
x_{i}
\leq
h_{i},
\\
&&&
y_{i}
\leq
f_{i},
\qquad
i=1,\ldots,n.
\end{aligned}
%\label{P-yixi-aijxjyi-gixihi-yifi}
\end{equation*}

By adding the vector $\bm{h}=(h_{i})$ and using $\bm{1}$ to indicate the vector of ones, we represent the problem as follows:
\begin{equation}
\begin{aligned}
&
\text{minimize}
&&
\bm{1}^{T}\bm{y}\bm{x}^{-}\bm{1},
\\
&
\text{subject to}
&&
\bm{A}\bm{x}
=
\bm{y},
\quad
\bm{g}
\leq
\bm{x}
\leq
\bm{h},
\quad
\bm{y}
\leq
\bm{f}.
\end{aligned}
\label{P-1yx1-Axy-gxh-yf}
\end{equation}

\begin{theorem}
\label{T-1yx1-Axy-gxh-yf}
Let $\bm{A}$ be a column-regular matrix, $\bm{h}$ and $\bm{f}$ be regular vectors such that $(\bm{h}^{-}\oplus\bm{f}^{-}\bm{A})\bm{g}\leq\mathbb{1}$. Then, the minimum value in problem \eqref{P-1yx1-Axy-gxh-yf} is equal to
\begin{equation*}
\theta
=
\bm{1}^{T}\bm{A}(\bm{I}\oplus\bm{g}\bm{h}^{-})\bm{1},
\end{equation*}
and all regular solutions are given by
\begin{equation*}
\bm{x}
=
(\bm{I}\oplus\theta^{-1}\bm{1}\bm{1}^{T}\bm{A})\bm{u},
\qquad
\bm{y}
=
\bm{A}(\bm{I}\oplus\theta^{-1}\bm{1}\bm{1}^{T}\bm{A})\bm{u},
\end{equation*}
where
\begin{equation*}
\bm{g}
\leq
\bm{u}
\leq
((\bm{h}^{-}\oplus\bm{f}^{-}\bm{A})(\bm{I}\oplus\theta^{-1}\bm{1}\bm{1}^{T}\bm{A}))^{-}.
\end{equation*}
\end{theorem}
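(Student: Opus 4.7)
My plan is to mirror the approach used in the proofs of Theorems~\ref{T-dyyd-Axy-Bxx-Cyx} and~\ref{T-1yy1-Axy-Bxx-Cyx-yf}: eliminate $\bm{y}$, extract a lower bound for the objective from the constraints, reformulate ``attaining this bound'' as a linear tropical inequality, and then solve the resulting system by applying Theorem~\ref{T-Ax-x} and Lemma~\ref{L-Ax-d}. First I would substitute $\bm{y}=\bm{A}\bm{x}$ to obtain the reduced problem of minimizing $F(\bm{x})=\bm{1}^{T}\bm{A}\bm{x}\,\bm{x}^{-}\bm{1}$ subject to $\bm{g}\leq\bm{x}\leq\bm{h}$ and $\bm{A}\bm{x}\leq\bm{f}$. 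The lower bound would come from two sources. Using $\bm{x}\bm{x}^{-}\geq\bm{I}$ gives $F(\bm{x})\geq\bm{1}^{T}\bm{A}\bm{1}$; using $\bm{A}\bm{x}\geq\bm{A}\bm{g}$ and $\bm{x}^{-}\geq\bm{h}^{-}$ yields $F(\bm{x})\geq(\bm{1}^{T}\bm{A}\bm{g})(\bm{h}^{-}\bm{1})=\bm{1}^{T}\bm{A}\bm{g}\bm{h}^{-}\bm{1}$. Combining these two bounds produces the announced value $\theta=\bm{1}^{T}\bm{A}(\bm{I}\oplus\bm{g}\bm{h}^{-})\bm{1}$.

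The critical step is to rewrite $F(\bm{x})\leq\theta$ as a linear inequality. Setting $\alpha=\bm{1}^{T}\bm{A}\bm{x}$, the scalar inequality $\alpha\,\bm{x}^{-}\bm{1}\leq\theta$ is equivalent to $x_{j}^{-1}\leq\theta\alpha^{-1}$ for every $j$, hence to the vector inequality $\bm{x}\geq\theta^{-1}\alpha\bm{1}=\theta^{-1}\bm{1}\bm{1}^{T}\bm{A}\bm{x}$. Writing $\bm{M}=\theta^{-1}\bm{1}\bm{1}^{T}\bm{A}$, the problem reduces to finding regular $\bm{x}$ satisfying $\bm{M}\bm{x}\leq\bm{x}$, $\bm{g}\leq\bm{x}\leq\bm{h}$, and $\bm{A}\bm{x}\leq\bm{f}$.

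I would next exploit the rank-one structure of $\bm{M}$. A direct calculation gives $\bm{M}^{k}=(\theta^{-1}\bm{1}^{T}\bm{A}\bm{1})^{k-1}\bm{M}$ and $\mathop\mathrm{tr}\bm{M}^{k}=(\theta^{-1}\bm{1}^{T}\bm{A}\bm{1})^{k}$, so the inequality $\bm{1}^{T}\bm{A}\bm{1}\leq\theta$ yields $\mathop\mathrm{Tr}(\bm{M})\leq\mathbb{1}$ and collapses the Kleene star to $\bm{M}^{\ast}=\bm{I}\oplus\theta^{-1}\bm{1}\bm{1}^{T}\bm{A}$. Theorem~\ref{T-Ax-x} then characterizes the regular solutions of $\bm{M}\bm{x}\leq\bm{x}$ as $\bm{x}=\bm{M}^{\ast}\bm{u}$ for regular $\bm{u}$. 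The lower constraint $\bm{g}\leq\bm{x}$ is handled exactly as in the proof of Theorem~\ref{T-xAx-Bxgx}, namely by requiring $\bm{u}\geq\bm{g}$ and using $\bm{M}^{\ast}\bm{u}\geq\bm{u}$. For the upper constraints I would rewrite $\bm{x}\leq\bm{h}$ as $\bm{h}^{-}\bm{M}^{\ast}\bm{u}\leq\mathbb{1}$ and $\bm{A}\bm{x}\leq\bm{f}$ as $\bm{f}^{-}\bm{A}\bm{M}^{\ast}\bm{u}\leq\mathbb{1}$, combine them into the single scalar inequality $(\bm{h}^{-}\oplus\bm{f}^{-}\bm{A})\bm{M}^{\ast}\bm{u}\leq\mathbb{1}$, and apply Lemma~\ref{L-Ax-d} to obtain $\bm{u}\leq((\bm{h}^{-}\oplus\bm{f}^{-}\bm{A})\bm{M}^{\ast})^{-}$. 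Reconstructing $\bm{y}=\bm{A}\bm{x}=\bm{A}\bm{M}^{\ast}\bm{u}$ gives the claimed formulas.

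The main obstacle is the rank-one reformulation in the second paragraph: establishing that $F(\bm{x})\leq\theta$ is equivalent to $\bm{M}\bm{x}\leq\bm{x}$ with the specific matrix $\bm{M}=\theta^{-1}\bm{1}\bm{1}^{T}\bm{A}$ is what makes the problem fall under the scope of Theorem~\ref{T-Ax-x} and is exactly what produces the compact closed form $\bm{M}^{\ast}=\bm{I}\oplus\theta^{-1}\bm{1}\bm{1}^{T}\bm{A}$ appearing in the statement. A secondary technicality is checking that the hypothesis $(\bm{h}^{-}\oplus\bm{f}^{-}\bm{A})\bm{g}\leq\mathbb{1}$ together with $\bm{1}^{T}\bm{A}\bm{1}\leq\theta$ guarantees $\bm{g}\leq((\bm{h}^{-}\oplus\bm{f}^{-}\bm{A})\bm{M}^{\ast})^{-}$, so that the feasible set for $\bm{u}$ is nonempty and the bound $\theta$ is actually attained.
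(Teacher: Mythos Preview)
Your argument is correct, but it follows a genuinely different route from the paper's. The paper does not derive the lower bound $\theta$ from first principles as you do; instead, after substituting $\bm{y}=\bm{A}\bm{x}$ and merging the two upper constraints into $\bm{x}\leq(\bm{h}^{-}\oplus\bm{f}^{-}\bm{A})^{-}$, it rewrites the objective as $\bm{x}^{-}\bm{1}\bm{1}^{T}\bm{A}\bm{x}$ and recognises the resulting problem as an instance of problem~\eqref{P-xAx-gxh}. It then invokes Theorem~\ref{T-xAx-gxh} as a black box, and only afterwards uses the rank-one structure of $\bm{1}\bm{1}^{T}\bm{A}$ to simplify both the spectral-radius formula for $\theta$ (via a two-case argument on whether $\bm{1}^{T}\bm{A}\bm{1}$ dominates $\bm{h}^{-}\bm{1}\bm{1}^{T}\bm{A}\bm{g}$) and the Kleene star $(\theta^{-1}\bm{1}\bm{1}^{T}\bm{A})^{\ast}$.

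Your approach is more self-contained: you obtain the bound $\theta$ directly from $\bm{x}\bm{x}^{-}\geq\bm{I}$ and the box constraints, then convert the attainment condition $F(\bm{x})\leq\theta$ into the linear inequality $\bm{M}\bm{x}\leq\bm{x}$, and appeal only to Theorem~\ref{T-Ax-x} and Lemma~\ref{L-Ax-d}. In effect you are inlining the relevant special case of Theorem~\ref{T-xAx-gxh} rather than quoting it, which makes the rank-one simplification appear earlier and avoids the case split the paper performs when collapsing the sum $\bigoplus_{k}(\cdot)^{1/k}$. The paper's route buys modularity (the reduction pattern is the same as in the other theorems of Section~\ref{S-APT}); yours buys a shorter, more transparent derivation of $\theta$. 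Both leave the nonemptiness check $\bm{g}\leq((\bm{h}^{-}\oplus\bm{f}^{-}\bm{A})\bm{M}^{\ast})^{-}$ at the level you indicate.
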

\begin{proof}
As before, we first substitute $\bm{y}=\bm{A}\bm{x}$. Solving the inequality $\bm{A}\bm{x}\leq\bm{f}$ by Lemma~\ref{L-Ax-d} yields $\bm{x}\leq(\bm{f}^{-}\bm{A})^{-}$. Then, we take the upper boundaries $\bm{x}\leq\bm{h}$ and $\bm{x}\leq(\bm{f}^{-}\bm{A})^{-}$, and apply conjugate transposition to rewrite the inequalities as $\bm{x}^{-}\geq\bm{h}^{-}$ and $\bm{x}^{-}\geq\bm{f}^{-}\bm{A}$. By coupling both inequalities into one, and again taking the conjugate transposition, we obtain one upper bound $\bm{x}\leq(\bm{h}^{-}\oplus\bm{f}^{-}\bm{A})^{-}$.

We write the objective function as $\bm{1}^{T}\bm{A}\bm{x}\bm{x}^{-}\bm{1}=\bm{x}^{-}\bm{1}\bm{1}^{T}\bm{A}\bm{x}$ to obtain the problem
\begin{equation}
\begin{aligned}
&
\text{minimize}
&&
\bm{x}^{-}\bm{1}\bm{1}^{T}\bm{A}\bm{x},
\\
&
\text{subject to}
&&
\bm{g}
\leq
\bm{x}
\leq
(\bm{h}^{-}\oplus\bm{f}^{-}\bm{A})^{-}.
\end{aligned}
%\label{P-x11Ax-gxhfA}
\end{equation}

The problem obtained is of the form of \eqref{P-xAx-gxh}, where $\bm{A}$ is replaced by $\bm{1}\bm{1}^{T}\bm{A}$ and $\bm{h}$ by $(\bm{h}^{-}\oplus\bm{f}^{-}\bm{A})^{-}$. To apply Theorem~\ref{T-xAx-gxh}, we need to find the spectral radius of the matrix $\bm{1}\bm{1}^{T}\bm{A}$. We first calculate, for each $k=1,\ldots,n$,
\begin{equation*}
(\bm{1}\bm{1}^{T}\bm{A})^{k}
=
(\bm{1}^{T}\bm{A}\bm{1})^{k-1}\bm{1}\bm{1}^{T}\bm{A},
\qquad
\mathop\mathrm{tr}(\bm{1}\bm{1}^{T}\bm{A})^{k}
=
(\bm{1}^{T}\bm{A}\bm{1})^{k},
\end{equation*}
from which it follows that the spectral radius is equal to $\lambda=\bm{1}^{T}\bm{A}\bm{1}>\mathbb{0}$.

Furthermore, we consider the minimum given by 
\begin{equation*}
\theta
=
\bm{1}^{T}\bm{A}\bm{1}
\oplus
(\bm{1}^{T}\bm{A}\bm{1})\bigoplus_{k=1}^{n-1}((\bm{1}^{T}\bm{A}\bm{1})^{-1}\bm{h}^{-}\bm{1}\bm{1}^{T}\bm{A}\bm{g})^{1/k}.
\end{equation*}

Suppose that $\bm{1}^{T}\bm{A}\bm{1}\leq\bm{h}^{-}\bm{1}\bm{1}^{T}\bm{A}\bm{g}$. Since the inequality $(\bm{1}^{T}\bm{A}\bm{1})^{-1}\bm{h}^{-}\bm{1}\bm{1}^{T}\bm{A}\bm{g}\geq\mathbb{1}$ holds, we have $((\bm{1}^{T}\bm{A}\bm{1})^{-1}\bm{h}^{-}\bm{1}\bm{1}^{T}\bm{A}\bm{g})^{1/k}\leq(\bm{1}^{T}\bm{A}\bm{1})^{-1}\bm{h}^{-}\bm{1}\bm{1}^{T}\bm{A}\bm{g}$, and, therefore, conclude that $\theta=\bm{h}^{-}\bm{1}\bm{1}^{T}\bm{A}\bm{g}$.

On the other hand, if $\bm{1}^{T}\bm{A}\bm{1}>\bm{h}^{-}\bm{1}\bm{1}^{T}\bm{A}\bm{g}$, then we have $\theta=\bm{1}^{T}\bm{A}\bm{1}$. By combining both results, we finally obtain $\theta=\bm{1}^{T}\bm{A}\bm{1}\oplus\bm{h}^{-}\bm{1}\bm{1}^{T}\bm{A}\bm{g}=\bm{1}^{T}\bm{A}(\bm{I}\oplus\bm{g}\bm{h}^{-})\bm{1}$.

To describe the solution set according to Theorem~\ref{T-xAx-gxh}, we examine the matrix
\begin{equation*}
(\theta^{-1}\bm{1}\bm{1}^{T}\bm{A})^{\ast}
=
\bigoplus_{k=0}^{n-1}(\theta^{-1}\bm{1}\bm{1}^{T}\bm{A})^{k}
=
\bm{I}
\oplus
\theta^{-1}\bigoplus_{k=1}^{n-1}(\theta^{-1}\bm{1}^{T}\bm{A}\bm{1})^{k-1}\bm{1}\bm{1}^{T}\bm{A}.
\end{equation*}

Considering the inequality $\theta\geq\bm{1}^{T}\bm{A}\bm{1}$, we have $(\theta^{-1}\bm{1}\bm{1}^{T}\bm{A})^{\ast}=\bm{I}\oplus\theta^{-1}\bm{1}\bm{1}^{T}\bm{A}$. Substitution into the solution provided by Theorem~\ref{T-xAx-gxh} yields $\bm{x}=(\bm{I}\oplus\theta^{-1}\bm{1}\bm{1}^{T}\bm{A})\bm{u}$, where the vector $\bm{u}$ satisfies the condition $\bm{g}\leq\bm{u}\leq((\bm{h}^{-}\oplus\bm{f}^{-}\bm{A})(\bm{I}\oplus\theta^{-1}\bm{1}\bm{1}^{T}\bm{A}))^{-}$.

Finally, we represent the vector $\bm{y}=\bm{A}\bm{x}$, which completes the proof.
%\qed
\end{proof}

\begin{example}
Assume that we need to find a schedule with the minimum makespan under the start-finish, release time and deadline constraints, which are defined by the matrix $\bm{A}$ and the vectors $\bm{g}$ and $\bm{f}$ in the previous examples. Suppose that, in addition, release deadlines have to be taken into account, given by the vector
\begin{equation*}
\bm{h}
=
\left(
\begin{array}{c}
3
\\
3
\\
2
\end{array}
\right).
\end{equation*}
\end{example}

To verify that the condition of Theorem~\ref{T-1yx1-Axy-gxh-yf} is fulfilled, we calculate the vectors 
\begin{equation*}
\bm{f}^{-}\bm{A}
=
\left(
\begin{array}{rrr}
-2
&
-3
&
-4
\end{array}
\right),
\qquad
\bm{h}^{-}\oplus\bm{f}^{-}\bm{A}
=
\left(
\begin{array}{rrr}
-2
&
-3
&
-2
\end{array}
\right),
\end{equation*}
and then obtain the required condition in the form $(\bm{h}^{-}\oplus\bm{f}^{-}\bm{A})\bm{g}=0=\mathbb{1}$.

We now apply Theorem~\ref{T-1yx1-Axy-gxh-yf} to find the optimal schedule. The calculation of the minimum makespan $\theta$ involves
\begin{equation*}
\bm{I}
\oplus
\bm{g}\bm{h}^{-}
=
\left(
\begin{array}{rrc}
0 & -1 & 0
\\
-1 & 0 & 0
\\
-2 & -2 & 0
\end{array}
\right),
\qquad
(\bm{I}\oplus\bm{g}\bm{h}^{-})\bm{1}
=
\left(
\begin{array}{c}
0
\\
0
\\
0
\end{array}
\right),
\\
\bm{1}^{T}\bm{A}
=
\left(
\begin{array}{ccc}
4
&
3
&
2
\end{array}
\right),
\end{equation*}
which results in $\theta=\bm{1}^{T}\bm{A}(\bm{I}\oplus\bm{g}\bm{h}^{-})\bm{1}=4$.

To represent the solution, we need to find the matrices
\begin{equation*}
\bm{1}\bm{1}^{T}\bm{A}
=
\left(
\begin{array}{ccc}
4 & 3 & 2
\\
4 & 3 & 2
\\
4 & 3 & 2
\end{array}
\right),
\qquad
\bm{I}\oplus\theta^{-1}\bm{1}\bm{1}^{T}\bm{A}
=
\left(
\begin{array}{crr}
0 & -1 & -2
\\
0 & 0 & -2
\\
0 & -1 & 0
\end{array}
\right),
\end{equation*}
and to form the vectors
\begin{equation*}
\bm{u}_{1}
=
\bm{g}
=
\left(
\begin{array}{c}
2
\\
2
\\
1
\end{array}
\right),
\qquad
\bm{u}_{2}
=
((\bm{h}^{-}\oplus\bm{f}^{-}\bm{A})(\bm{I}\oplus\theta^{-1}\bm{1}\bm{1}^{T}\bm{A}))^{-}
=
\left(
\begin{array}{c}
2
\\
3
\\
2
\end{array}
\right).
\end{equation*}

The vectors of the optimal start and finish times in the project take the form
\begin{equation*}
\bm{x}
=
(\bm{I}\oplus\theta^{-1}\bm{1}\bm{1}^{T}\bm{A})\bm{u}
=
\left(
\begin{array}{crr}
0 & -1 & -2
\\
0 & 0 & -2
\\
0 & -1 & 0
\end{array}
\right)
\bm{u},
\qquad
\bm{y}
=
\bm{A}\bm{x}
=
\left(
\begin{array}{ccc}
4 & 3 & 2
\\
3 & 3 & 1
\\
2 & 1 & 2
\end{array}
\right)
\bm{u},
\end{equation*}
where the vector $\bm{u}$ satisfies the condition $\bm{u}_{1}\leq\bm{u}\leq\bm{u}_{2}$.

Note that the solution can be simplified as follows. The lower and upper bounds $\bm{u}_{1}$ and $\bm{u}_{2}$ for the vector $\bm{u}$ yield the corresponding bounds on $\bm{x}$ in the form
\begin{equation*}
\bm{x}_{1}
=
(\bm{I}\oplus\theta^{-1}\bm{1}\bm{1}^{T}\bm{A})\bm{u}_{1}
=
\left(
\begin{array}{c}
2
\\
2
\\
2
\end{array}
\right),
\qquad
\bm{x}_{2}
=
(\bm{I}\oplus\theta^{-1}\bm{1}\bm{1}^{T}\bm{A})\bm{u}_{2}
=
\left(
\begin{array}{c}
2
\\
3
\\
2
\end{array}
\right).
\end{equation*}

Since the first and third elements of $\bm{x}_{1}$ and $\bm{x}_{2}$ coincide, the elements of the vector $\bm{x}$ can be directly defined by $x_{1}=2$, $2\leq x_{2}\leq3$ and $x_{3}=2$.

After calculating the vector $\bm{y}=\bm{A}\bm{x}$, we write the results using a parameter $v$ and standard operations as follows:
\begin{alignat*}{4}
x_{1}
&=
2,
&\qquad
x_{2}
&=
v,
&\qquad
x_{3}
&=
2,
\\
y_{1}
&=
6,
&\qquad
y_{2}
&=
v+3,
&\qquad
y_{3}
&=
4,
&\qquad
2
\leq
v
\leq
3.
\end{alignat*}

\section{Conclusions}

In this paper, we demonstrated new solution techniques, based on the models and methods of tropical mathematics, for a class of project scheduling problems with minimax objectives. The paper extends and generalizes results in \cite{Krivulin2012Algebraic,Krivulin2014Aconstrained,Krivulin2015Amultidimensional,Krivulin2015Extremal,Krivulin2015Tropical} by solving problems not previously considered, which involve different objective functions and/or more complicated systems of constraints.

It was shown that many typical constraints and objectives that occur in time-constrained project scheduling are naturally represented in terms of tropical algebra in a compact vector form. Specifically, the start-finish, start-start and finish-start precedence relationships are readily given by linear vector equations and inequalities, whereas the project makespan, the maximum deviation from due dates and the maximum flow-time optimization criteria can be well written as nonlinear functions defined on vectors through a multiplicative conjugate transposition operator. 

The scheduling problems of interest were reduced to tropical optimization problems, which are solved through the application and further development of recent results in tropical optimization. The solutions are given in direct, explicit vector forms ready for formal analysis and straightforward computation with low polynomial complexity. The explicit form of the solutions can be considered as a definite advantage over the algorithmic techniques normally used in scheduling. Specifically, the problems under study can be formulated as linear programs, which offers algorithmic solutions using one of the iterative computational schemes of linear programming, but does not guarantee a direct closed-form solution.

The contribution of the paper is twofold. First, we have developed new applications of tropical optimization, formulated new optimization problems, extended existing solution methods to these problems, and derived direct solutions. Second, we have devised a new solution approach for time-constrained scheduling problems, which involves the representation of the problems in the tropical mathematics setting, and the application of methods from tropical optimization to obtain direct solutions in a closed form. Analytical techniques proposed and described in the paper can serve as a template for the solution of other optimization problems, and for the application of the solutions to real-world problems in various fields.

Possible lines of further investigation include the extension of the approach to account for new types of constraints and objectives in the scheduling problems under study, and to solve new classes of problems, including scheduling problems with renewable resources. Computational experiments with real-world data present another research topic of interests. 

\bibliographystyle{abbrvurl}
\bibliography{Tropical_optimization_problems_in_time_constrained_project_scheduling}

\end{document}